\documentclass{amsart}
\usepackage{graphicx}
\usepackage{url}

\newtheorem{theorem}{Theorem}[section]
\newtheorem{lemma}[theorem]{Lemma}
\newtheorem{prop}[theorem]{Proposition}
\newtheorem{cor}[theorem]{Corollary}

\theoremstyle{definition}
\newtheorem{definition}[theorem]{Definition}

\newtheorem*{notation*}{Notation}
\newtheorem{notation}[theorem]{Notation}

\theoremstyle{remark}
\newtheorem{remark}[theorem]{Remark}

\numberwithin{equation}{section}

%    Absolute value notation
\newcommand{\abs}[1]{\lvert#1\rvert}

%    Blank box placeholder for figures (to avoid requiring any
%    particular graphics capabilities for printing this document).

\usepackage{cite}
\nocite*{}

\title{Connected sums of knots do not admit purely cosmetic surgeries}

\author{Ran Tao}
\address{School of mathematical sciences, Sichuan normal university, Chengdu, China}
\email{rtao\_sicnu@163.com}

\date{\today}

\begin{document}

\begin{abstract}
Two Dehn surgeries on a knot are called purely cosmetic if their surgered manifolds are homeomorphic as oriented manifolds. Gordon conjectured  that non-trivial knots in $S^3$ do not admit purely cosmetic surgeries. In this article, we confirm this conjecture for connected sums of knots by analysing the JSJ-structures.

\end{abstract}

\maketitle

\section{introduction} % (fold)
\label{sec:introduction}

Let $K$ be a knot in $S^3$, 
we call two different surgery slopes $r$ and $s$ \emph{purely cosmetic} 
if the surgered manifolds $S^3_{r}(K)$ and $S^3_{s}(K)$ are homeomorphic as oriented manifolds.
The Cosmetic Surgery Conjecture (\cite{gordon1990dehn,kirby1995problems}) says that if $K$ is non-trivial, 
then it does not admit purely cosmetic surgeries. 
In \cite{MR3943698}, 
we confirmed this conjecture for cable knots by studying the JSJ-structures. 
It is natural to ask if this method can be applied to other families of knots.
In this article, we show that composite knots, which are connected sums of non-trivial knots,
also satisfy this conjecture. 
Our main result is the following:

\begin{theorem}
\label{main_thm}
Let $J$ be a composite knot. 
Suppose there exists an orientation-preserving homeomorphism $h: S^3_{r}(J) \to S^3_{s}(J)$, then $r = s$.
\end{theorem}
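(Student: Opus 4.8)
The plan is to recover the surgery slope from the canonical (JSJ) geometry of the surgered manifold. Write $J = K_1 \# K_2$ with $K_1, K_2$ non-trivial. Since homeomorphic manifolds have isomorphic first homology, we may take $r = p/q$ and $s = p/q'$ with $p := \abs{H_1(S^3_r(J))} = \abs{H_1(S^3_s(J))}$; the case $p = 0$ is vacuous (there is a unique slope with $p = 0$), and $q, q' \neq 0$ since if one of $r, s$ were the trivial slope so would be the other, by Gordon--Luecke, giving $r = s$. Recall that $E(J)$ is the union of $E(K_1)$, $E(K_2)$ and the composing space $\Sigma \cong P \times S^1$ (with $P$ a thrice-punctured sphere) glued along tori, the $S^1$-fibre restricting to the meridian on each of $\partial E(K_1)$, $\partial E(K_2)$, $\partial E(J)$; and that a composite knot admits no reducible non-trivial surgery. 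Filling $\partial E(J)$ along $p\mu_J + q\lambda_J$, which is not the fibre slope $\mu_J$ as $q \neq 0$, converts $\Sigma$ into a cable space $C_q$, i.e.\ a Seifert fibred space over an annulus with one exceptional fibre of order $\abs{q}$, which degenerates to $T^2 \times I$ exactly when $\abs{q} = 1$. Since the fibre slope of $C_q$ on $\partial E(K_i)$ is the meridian $\mu_i$ --- never the boundary fibre slope (cabling slope) of a torus-knot exterior, and of course never matching a hyperbolic piece --- $C_q$ cannot amalgamate with any Seifert piece of $E(K_i)$; so for $\abs{q} \ge 2$ the manifold $S^3_{p/q}(J)$ is toroidal, with $C_q$ a genuine JSJ piece whose exceptional fibre is the core of the surgery solid torus.

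Assume first $\abs{q} \ge 2$. Comparing the JSJ graphs of $S^3_{p/q}(J)$ and $S^3_{p/q'}(J)$, which coincide away from the composing piece, gives $\abs{q'} \ge 2$ (else the numbers of JSJ tori differ) and --- the step I expect to be the main obstacle --- that $h$ carries $C_q$ onto $C_{q'}$, equivalently carries the dual knot of the $p/q$-surgery to that of the $p/q'$-surgery. This is immediate when $K_1, K_2$ are hyperbolic, since then $C_q$ is the unique Seifert piece; in general one must distinguish $C_q$ from the cable-space JSJ pieces contributed by cable summands, using the combinatorics of the JSJ graph together with the fact that the fibre of $C_q$ meets each of its boundary tori in the meridian of the adjacent summand. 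Granting $h(C_q) = C_{q'}$: drilling out the exceptional fibre of $C_q$ recovers $E(J)$ with boundary torus $T_0$, on which the fibre slope $\mu_J$ of the uniquely fibred composing space $\Sigma$ and the slope $\lambda_J$ null-homologous in $E(J)$ form a basis of $H_1(T_0)$, while the meridian of the drilled solid torus is the surgery slope $p\mu_J + q\lambda_J$. As every step of this reconstruction is canonical, $h$ induces a homeomorphism $E(J) \to E(J)$ fixing the slopes $\mu_J$ and $\lambda_J$, hence acting by $\pm\mathrm{id}$ on $H_1(T_0)$; being orientation-preserving, it then fixes the surgery slope, so $p/q = p/q'$ and $q = q'$.

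Finally suppose $\abs{q} = \abs{q'} = 1$, so that we must exclude $\{r, s\} = \{p, -p\}$. Here $C_{\pm 1} = T^2 \times I$ and $S^3_{\pm p}(J) = E(K_1) \cup_{\phi_{\pm}} E(K_2)$, where $\phi_+$ and $\phi_-$ both send $\mu_1 \mapsto \mu_2$ and differ by $2p$ meridional Dehn twists. If $\partial E(K_1)$ is a JSJ torus, then $h$ respects it and the two pieces $E(K_1), E(K_2)$; since a homeomorphism between non-trivial knot exteriors fixes the meridian (Gordon--Luecke) and acts by $\pm\mathrm{id}$ on the peripheral tori, comparing $\phi_+$ with $\phi_-$ through $h$ forces $p = 0$, a contradiction. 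Otherwise $S^3_{\pm p}(J)$ is Seifert fibred, whence --- a knot exterior being Seifert fibred only for a torus knot --- $K_1, K_2$ are torus knots and $S^3_{\pm p}(J)$ fibres over $S^2$ with four exceptional fibres; a comparison of Seifert invariants, e.g.\ via Euler numbers, then shows $S^3_p(J) \not\cong S^3_{-p}(J)$. This completes the proof.
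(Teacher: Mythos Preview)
Your overall plan --- locate the ``new'' Seifert piece created by the filling, show any homeomorphism must match these pieces, then drill the exceptional fibre to recover $E(J)$ and invoke Gordon--Luecke --- is close in spirit to the paper's argument, but it is organised very differently and, crucially, you do \emph{not} invoke Hanselman's Heegaard--Floer obstruction (Theorem~\ref{hanselmans theorem}), which the paper uses at the outset to reduce to the slopes $\{\pm1\},\{\pm2\},\{\pm1/2\}$. That reduction is what makes the paper's JSJ case analysis finite; without it you must treat all integral slopes by hand, and this is where your argument breaks.

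There is first a set-up issue. Writing $J=K_1\#K_2$ with $K_i$ merely non-trivial, your ``composing space'' $\Sigma=P\times S^1$ is \emph{not} the JSJ piece when some $K_i$ is itself composite: the outermost piece of $E(K_i)$ is then another composing space whose fibre on $\partial E(K_i)$ is again the meridian, so your $C_q$ amalgamates with it and is not a JSJ piece. Your sentence ``$C_q$ cannot amalgamate with any Seifert piece of $E(K_i)$'' only checks torus-knot and hyperbolic neighbours and is false in this case. This is fixable by working with the full composing piece $S(0,n{+}1;)$, after which the filled piece $S(0,n;p/q)$ for $n\ge3$ is automatically the unique JSJ piece of its type.

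The real gap is your treatment of $\lvert q\rvert=1$. Two claims fail:
\begin{itemize}
\item ``If $\partial E(K_1)$ is a JSJ torus, then $h$ respects it and the two pieces $E(K_1),E(K_2)$.'' A homeomorphism must respect the JSJ \emph{decomposition}, not any particular torus in it. If, say, $K_2=C_{a,b}(K_1)$, then $S^3_{\pm p}(J)$ has JSJ graph $E(K_1)\,\text{--}\,D\,\text{--}\,E(K_1')$ with $E(K_1)\cong E(K_1')$, and there is no combinatorial reason $h$ cannot carry the torus $\partial E(K_1)$ to the \emph{inner} torus $\partial E(K_1')$. Your meridian-matching/twist-count argument never gets started.
\item ``Otherwise $S^3_{\pm p}(J)$ is Seifert fibred, whence $K_1,K_2$ are torus knots.'' The failure of $\partial E(K_1)$ to be JSJ only says the two \emph{outermost} pieces of $E(K_1),E(K_2)$ merge; if either $K_i$ is a cable (not torus) knot there remain JSJ tori deeper in $E(K_i)$, and the surgered manifold is not Seifert fibred. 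So your Seifert-invariant comparison does not cover all remaining cases.
\end{itemize}
These are exactly the configurations the paper spends Lemma~\ref{not_fixing_root}, Corollary~\ref{prime_factors_greater_than_two}, and Propositions~\ref{two_factors_r_1/2}--\ref{three_factors_r_Z} disentangling --- and even there only after Hanselman has cut the integral slopes down to $\pm1,\pm2$. Your $\lvert q\rvert\ge2$ sketch (fibre of $C_q$ is meridional on the adjacent knot-complement, while any other cable-space piece has non-meridional fibre on its inner boundary, contradiction via Gordon--Luecke) is essentially the paper's Proposition~\ref{two_factors_r_1/2} and can be completed; but the $\lvert q\rvert=1$ case as written does not go through.
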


We mention a related result here, which was recently proved in \cite{2019arXiv190606773H} using Heegaard Floer homology. 
Note that our result includes Theorem \ref{hanselman_connected_sum} as partial cases.

\begin{theorem}[Theorem 4 of \cite{2019arXiv190606773H}]
\label{hanselman_connected_sum}
Let $K \subset S^3$ be a non-trivial knot whose prime summands each have at most $16$ crossings, then $S^3_r(K) \not\cong S^3_s(K)$ for $r \neq s$.
\end{theorem}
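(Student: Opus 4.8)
The plan is to follow the Heegaard Floer strategy, handling prime ($n=1$) and composite ($n\ge 2$) knots by a single genus--thickness inequality and invoking the crossing bound only at the end as a finite numerical input.

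\emph{Step 1 (reducing the slopes).} Assume $h\colon S^3_r(K)\to S^3_s(K)$ is an orientation-preserving homeomorphism with $r\neq s$. Comparing Casson--Walker invariants (Boyer--Lines) forces $\Delta_K''(1)=0$ and, together with the correction-term analysis of Ni--Wu, forces $s=-r$. Writing $r=p/q$ with $p>0$, $q\ge 1$, $\gcd(p,q)=1$, the $d$-invariant comparison further yields the arithmetic constraint $q^2\equiv -1 \pmod p$. Thus a hypothetical cosmetic pair consists of mirror slopes $\pm p/q$ subject to these relations.

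\emph{Step 2 (the genus--thickness inequality).} Encode $CFK^\infty(K)$ as the immersed multicurve $\gamma(K)$ in the punctured torus, so that the ranks and $d$-invariants of any Dehn filling are computed from the graded intersections of $\gamma(K)$ with a line of the corresponding slope. Since $\gamma(K)$ is symmetric under the elliptic involution, its distinguished component contributes identically to the invariants of $S^3_{p/q}(K)$ and $S^3_{-p/q}(K)$; any obstruction to their agreement must therefore come from the remaining closed components, whose vertical positions are governed by the Heegaard Floer thickness $\mathrm{th}(K)$ and whose horizontal scale is set by the genus $g=g(K)$. Demanding that the slope lines of $\pm p/q$ fail to separate these features forces $p=2$ and a bound of the shape
\begin{equation}
2\,g(K)\,\bigl(g(K)-1\bigr)\,q \;\le\; \mathrm{th}(K) ,
\label{eqn:ght}
\end{equation}
valid once $g(K)\ge 2$. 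In particular, for $g(K)\ge 2$ a cosmetic pair can exist only if $\mathrm{th}(K)\ge 2g(K)(g(K)-1)\ge 4$.

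\emph{Step 3 (additivity and the finite check).} Both quantities in \eqref{eqn:ght} are additive under connected sum: genus is always additive, and $\mathrm{th}(K_1\#K_2)=\mathrm{th}(K_1)+\mathrm{th}(K_2)$ because $\widehat{HFK}(K_1\#K_2)\cong\widehat{HFK}(K_1)\otimes\widehat{HFK}(K_2)$, so the spreads of the $\delta$-gradings add. Note that $g(K)=1$ holds exactly when $K$ is a single prime summand of genus one, since a sum of $n\ge 2$ non-trivial knots has genus $\ge 2$. For each prime summand $K_i$ with at most $16$ crossings the pair $(g(K_i),\mathrm{th}(K_i))$ is tabulated; summing over the summands and substituting into \eqref{eqn:ght} with $q\ge 1$ gives a contradiction whenever $g(K)\ge 2$ (the quadratic term $2g(g-1)$ dominates the additive thickness). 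The genus-one case is prime of genus one, where $\Delta_K''(1)=0$ forces $\Delta_K\equiv 1$, leaving only finitely many $\le 16$-crossing knots, which are dispatched by a direct $d$-invariant computation together with the arithmetic of Step 1.

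\emph{Main obstacle.} The theoretical core is Step 2: turning the immersed-curve comparison of $d$-invariants and $\widehat{HF}$-ranks for the mirror slopes $\pm p/q$ into the precise inequality \eqref{eqn:ght}, and in particular controlling the genus-one case where the factor $g(g-1)$ degenerates. The remaining difficulty is computational rather than conceptual: certifying the thickness values for all prime knots through $16$ crossings (thinness for the alternating and quasi-alternating families, explicit small thickness otherwise), so that the finite verification in Step 3 is exhaustive.
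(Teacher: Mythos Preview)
The paper does not prove this statement: Theorem~\ref{hanselman_connected_sum} is cited verbatim as Theorem~4 of \cite{2019arXiv190606773H} and presented only as a related result. The present paper's own contribution (Theorem~\ref{main_thm}) concerns composite knots with no crossing bound and is proved by an entirely different method---analysing JSJ-decompositions of the surgered manifolds---using Hanselman's obstruction (Theorem~\ref{hanselmans theorem}) only as a black-box input. There is therefore no proof in this paper to compare your proposal against.

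Your sketch is, in outline, Hanselman's own argument from the cited preprint. Two small inaccuracies: in Step~2 the conclusion is $p\in\{1,2\}$, not just $p=2$ (the $\{\pm 2\}$ case is separate and forces $g(K)=2$); and your displayed inequality drops the additive $2g(K)$ term from Hanselman's bound as recorded in Theorem~\ref{hanselmans theorem}. More substantively, your Steps~1--2 are exactly the content of Theorem~\ref{hanselmans theorem}, valid for all knots with no crossing restriction; the $16$-crossing hypothesis enters only in Step~3, so the actual content of the cited theorem is precisely the exhaustive tabulation and finite verification you describe as ``computational rather than conceptual.''
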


Our result uses the following obstruction theorem, 
which improves the main result in \cite{ni2015cosmetic}.

\begin{theorem}[\cite{2019arXiv190606773H}]
\label{hanselmans theorem}
If $K$ is a non-trivial knot in $S^3$ and 
$S^3_r(K) \cong S^3_s(K)$ for $r \neq s$,
then we have the following:

\begin{itemize}
	\item The pair of slopes $\{r, s\}$ are either $\{ \pm 2\}$ or $\{ \pm 1/q \}$ for some positive integer $q$;
	\item if $\{r, s\}$ are $\{ \pm 2\}$, then the Seifert genus $g(K) = 2$; 
	\item if $\{r, s\}$ are $\{ \pm 1/q\}$, then $q \le \dfrac{th(K) + 2g(K)}{2g(K)(g(K) - 1)}$, where $th(K)$ is the Heegaard-Floer thickness of $K$.
\end{itemize}
\end{theorem}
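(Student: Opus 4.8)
The plan is to build on the Ni--Wu obstruction from \cite{ni2015cosmetic} and then sharpen it with a correction-term count that I package geometrically. Throughout, assume $K$ is non-trivial and fix an orientation-preserving homeomorphism $S^3_r(K) \cong S^3_s(K)$ with $r \neq s$.

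\emph{Step 1 (reduction to opposite slopes).} First I would record the homological and Casson--Walker constraints. Since an orientation-preserving homeomorphism preserves $\abs{H_1}$ and the linking form, writing $r = p/q$ and $s = p'/q'$ forces $p' = p$ together with an isomorphism of the (oriented) linking forms of $L(p,q)$ and $L(p,q')$. Combining this with the Boyer--Lines surgery formula $\lambda(S^3_{p/q}(K)) - \lambda(L(p,q)) = \tfrac{q}{2p}\Delta_K''(1)$ for the Casson--Walker invariant $\lambda$ --- which is also preserved --- yields, exactly as in \cite{ni2015cosmetic}, that $s = -r$ and $q^2 \equiv -1 \pmod p$. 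So I may assume $r = p/q$, $s = -p/q$ with $p,q > 0$ coprime.

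\emph{Step 2 (correction-term identities).} The homeomorphism matches the multiset of $d$-invariants of $S^3_{p/q}(K)$ with that of $S^3_{-p/q}(K) = -S^3_{p/q}(\overline K)$, using that $d$ changes sign under orientation reversal. Expanding via the Ni--Wu surgery formula
\[
d(S^3_{p/q}(K), i) = d(S^3_{p/q}(U), i) - 2\max\{V_{\lfloor i/q\rfloor}(K),\, V_{\lceil (p-i)/q\rceil}(K)\},
\]
together with the mirror relation between the $V$-invariants of $K$ and $\overline K$, I would subtract the two expansions and sum the resulting equalities over all $p$ spin$^c$ structures. This simplifies the lens-space contribution to a term depending only on $(p,q)$ and produces a single scalar identity relating a weighted total $\sum_i V_i(K)$ to the arithmetic data $(p,q)$.

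\emph{Step 3 (geometric counting and the sharp bounds).} To convert this identity into the stated dichotomy I would pass to the immersed-curve model of $\widehat{\mathit{HFK}}(K)$: the $V_i$, and hence every correction term above, are read off from minimal intersections of the curve $\gamma(K)$ in the punctured torus with the line of slope $p/q$. In this language the Seifert genus $g(K)$ bounds the horizontal extent of $\gamma(K)$ while the Heegaard-Floer thickness $th(K)$ bounds how far $\gamma(K)$ departs from the thin (single horizontal) model. Counting the intersection points that contribute to the scalar identity of Step 2, and comparing the contribution of slope $p/q$ against that of $-p/q$, I expect the cosmetic identity to be satisfiable only when $p \le 2$: when $p = 2$ the minimal admissible configuration forces $q = 1$ and pins the vertical size of $\gamma(K)$ to $g(K) = 2$, and when $p = 1$ the same count gives the explicit inequality $q \le \frac{th(K) + 2g(K)}{2g(K)(g(K)-1)}$.

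\emph{Main obstacle.} The reductions of Steps 1--2 are essentially bookkeeping; the crux is the sharp geometric count in Step 3. The difficulty is twofold: ruling out every $p \ge 3$ and every slope $\pm 2/q$ with $q \ge 3$ requires showing the cosmetic identity forces more cancellation than the curve geometry can provide, and extracting the exact constant $\frac{1}{2g(g-1)}$ demands controlling the interaction between the wrapping of $\gamma(K)$ (governed by $g$) and its non-thin excursions (governed by $th$). Thick knots, where $\gamma(K)$ has many strands, are the delicate case, and I expect the bulk of the work to go into making the intersection count tight enough to yield an equality rather than a crude bound.
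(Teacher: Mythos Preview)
There is nothing to compare your proposal against: the paper does not prove this theorem. It is quoted verbatim from \cite{2019arXiv190606773H} and used as a black box (see the attribution in the theorem header and the sentence ``Our result uses the following obstruction theorem, which improves the main result in \cite{ni2015cosmetic}''). The present paper's contribution begins only at the JSJ analysis in Sections~2--4; Hanselman's constraints are input, not output.

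For what it is worth, your sketch does track the actual argument in \cite{2019arXiv190606773H}: Step~1 is the Ni--Wu reduction, and Step~3 correctly identifies the immersed-curve reformulation of knot Floer homology as the engine that produces the sharp inequality in $g(K)$ and $th(K)$. Your ``main obstacle'' paragraph is accurate --- the work in Hanselman's paper is exactly the intersection-number bookkeeping that rules out $p \ge 3$ and pins down the constant. But none of this appears in, or is needed for, the paper you are reading; if you want to supply a proof here you would simply be reproducing \cite{2019arXiv190606773H}.
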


The outline of the proof of Theorem \ref{main_thm} is the following: 
first, 
we only need to consider surgery pairs 
$\{ \pm 2\}, \{ \pm 1\}$ and $\{ \pm 1/q\}$ for $q >1$ by Theorem \ref{hanselmans theorem};
if $\{r ,s\} = \{ \pm 1/q \}$ for $q > 2$, 
then the JSJ-pieces of $S^3_r(K)$ and $S^3_s(K)$ are different; 
if $J$ has at least three prime summands and $\{ r, s\} = \{ \pm 1/2\}$, 
or if $J$ has only two prime summands and $\{ r, s\} \subset \mathbb{Z}$, 
then the JSJ-structures are easy to analyse;
the remaining cases are more difficult, 
and we need to analyse more carefully the mappings and symmetries of the surgered manifolds.

\begin{figure}[ht]

\includegraphics[scale=0.2,angle=90]{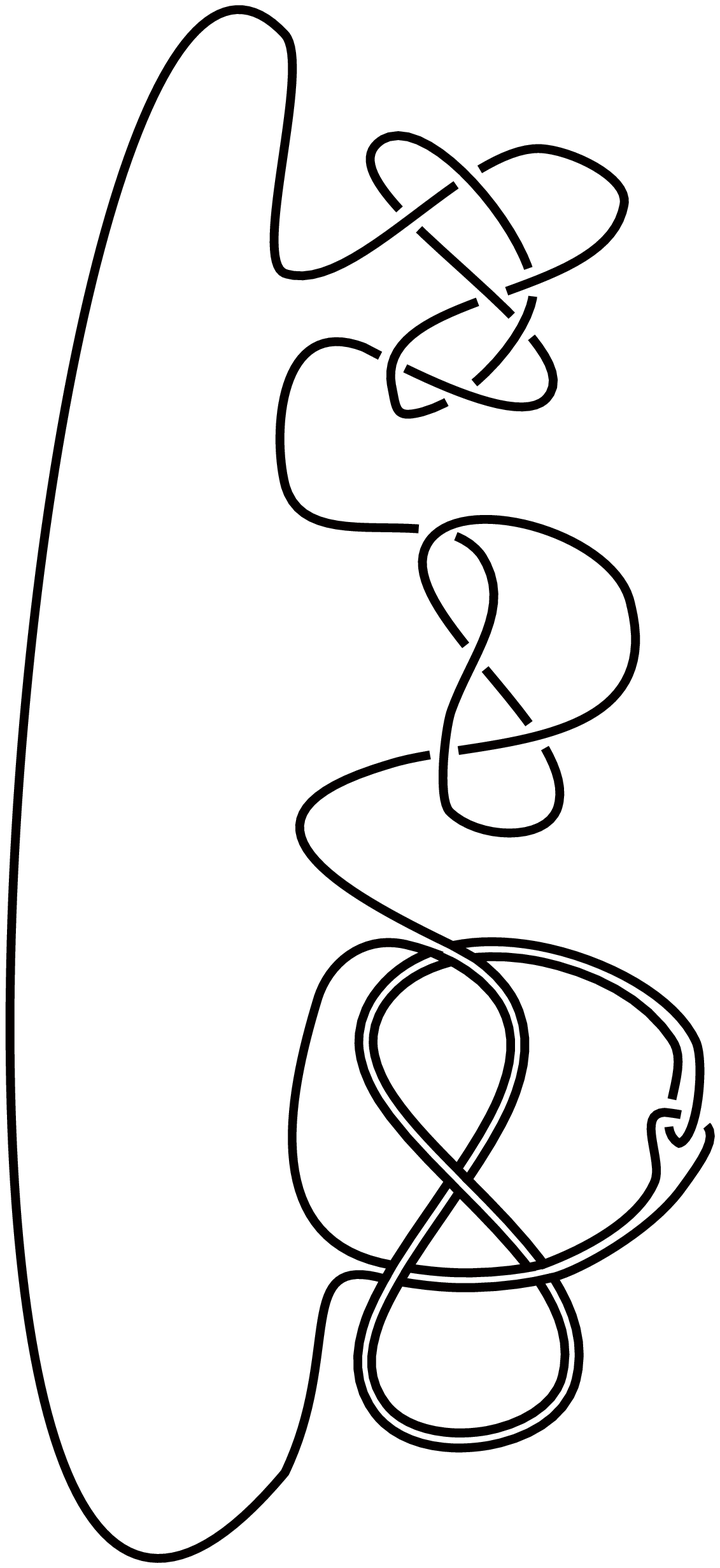}
\includegraphics{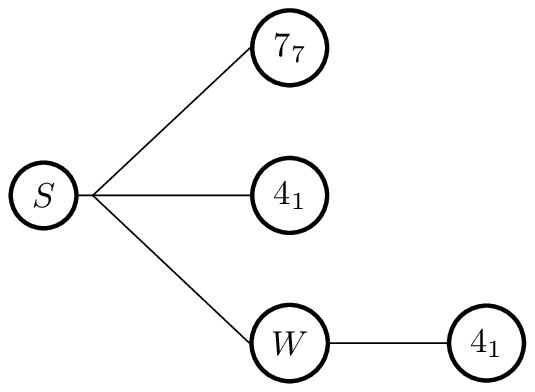}
\caption{Left: A composite knot $J$ with three prime summands: $7_7$, the Figure-$8$ knot, and the Whitehead double of the Figure-$8$ knot.  Right: the corresponding tree structure of the JSJ-decomposition of $E(J)$.
\label{first example}}
\end{figure}

\subsection*{Acknowledgements.} I wish to thank Zhongtao Wu and Jingling Yang for helpful comments and discussions. 
This work was partially supported by Sichuan Science and Technology Program
(No. 2019YJ0509).

% section introduction (end)

\section{The JSJ-decompositions of knot complements}

\begin{notation} 

\label{notation1}

The notation $M \cong M'$ means that there exists an orientation-preserving homeomorphism 
between oriented $3$-manifolds $M$ and $M'$. 
All the $3$-manifolds in this article are assumed to be compact and oriented.

We use $N(K)$ to denote a tubular neighborhood of the knot $K$ in a $3$-manifold and $E(K)$ to denote the complement of $N(K)$ in this manifold. 
We assume $K$ is a knot in  $S^3$ unless otherwise stated. 
For each knot $K$, a prefered longitude is a simple closed curve on $\partial N(K)$ with trivial homology in $E(K)$. 

Fix a  knot $K$, we can talk about the slopes at the boundary torus of $N(K)$ or of $E(K)$. A $(p,q)$-curve or a $p/q$-slope is a simple closed curve that winds $p$ times along the meridional direction and $q$ times along the prefered longitude. In most cases, this torus is the boundary of $N(K)$ (or $E(K)$) of a knot $K$, and the prefered longitude is chosen to be a parallel copy of $K$ on $\partial N(K)$ which has linking number $0$ with $K$. 

We use $C_{p,q}(K)$ to denote the $(p,q)$-cable of $K$, with longitudinal winding number $\abs{q}>1$. Since $C_{p,\pm 1} (K)$ is isotopic to $K,$ we require that $\abs{q} \ge 2$.  

Let $M$ be a $3$-manifold with a toroidal boundary component. Suppose  we can talk about $(p/q)$-slopes on this boundary torus. We use $M(r)$ to denote the resulting manifold of the $r$-slope Dehn filling on $M$.  

% Given two slopes $r$ and $s$, we use $\Delta(r,s)$ to denote their minimal geometric intersection number on the torus. 

We use the standard notation $S(0,n; \alpha/\beta )$ to denote the Seifert fibered space with $n$ boundary components, one singular fiber with coefficient $\alpha/ \beta$, and a base orbifold of genus zero. We omit $\alpha/ \beta$ if there are no singular fibers.

\end{notation}

In this section, we collect some results related to JSJ-decomposition\cite{MR520524,MR551744,hatcher2000notes,budney2005jsj}.
Most materials are taken from \cite{MR3943698}.

\begin{theorem}[The JSJ-decomposition theorem, Theorem 1.9 of \cite{hatcher2000notes}]
Let $M$ be a compact irreducible orientable $3$-manifold. Then there exists a finite collection of embedded tori $\{T_i\}$ in $M$ such that each component of $M\backslash \cup_i T_i$ is either atoroidal or Seifert fibered. Furthermore, a minimal choice of such a collection  is unique up to isotopy. 
\end{theorem}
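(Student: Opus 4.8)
The plan is to prove existence and uniqueness separately, the two inputs being Kneser--Haken finiteness and the structure theory of essential annuli and tori in Seifert fibered and atoroidal manifolds.

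\emph{Existence.} First I would invoke Kneser--Haken finiteness: in a compact orientable $3$-manifold there is a uniform bound on the number of disjoint, pairwise non-parallel incompressible tori (proved by normal surface theory or a handle decomposition). Hence a collection $\mathcal{T}_0$ of disjoint, pairwise non-parallel \emph{essential} tori that is maximal with respect to inclusion exists and is finite. I claim every component $N$ of $M \setminus \mathcal{T}_0$ is atoroidal. Each $N$ is irreducible (cutting an irreducible manifold along incompressible tori yields irreducible pieces), so otherwise $N$ would contain an incompressible torus $S$ that is not boundary-parallel in $N$. Using irreducibility of $M$ and incompressibility of the tori of $\mathcal{T}_0$, one checks that $S$ is then incompressible and not boundary-parallel in $M$, and not parallel to any $T_i \in \mathcal{T}_0$: any parallelism would be realized by a product region $T^2 \times I$ whose interior meets $\mathcal{T}_0$ in at most one torus (incompressible in $T^2 \times I$, hence parallel to the core), and following this region inward from $S$ would exhibit $S$ as boundary-parallel in $N$, a contradiction. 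Thus $\mathcal{T}_0 \cup \{S\}$ would be a strictly larger disjoint, non-parallel, essential collection, contradicting maximality. So $\mathcal{T}_0$ already has atoroidal complementary pieces, a fortiori atoroidal or Seifert fibered. Now, among all collections of disjoint essential tori whose complementary pieces are each atoroidal or Seifert fibered --- a family that is nonempty by the above --- choose one, $\mathcal{T}$, of least cardinality. Such a $\mathcal{T}$ has no two parallel tori and no boundary-parallel torus, since deleting a redundant one merely fuses a complementary piece with an adjacent collar $T^2 \times I$, producing a smaller admissible collection and contradicting minimality. This $\mathcal{T}$ gives the required decomposition.

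\emph{Uniqueness.} Given two minimal admissible collections $\mathcal{T}$ and $\mathcal{T}'$, isotope them so that $\mathcal{T} \cap \mathcal{T}'$ has the fewest circle components. An innermost-disk argument (again using irreducibility of $M$ and incompressibility of all tori) shows each intersection circle is essential on each torus, so on every torus the intersection is a parallel family of essential curves, and each torus of $\mathcal{T}$ is cut by $\mathcal{T}'$ into annuli, each contained in a single piece of $M \setminus \mathcal{T}'$ and incompressible there; were one of them boundary-parallel in its piece, one could isotope $\mathcal{T}$ across the product region to lower the count, so all are \emph{essential} annuli. At this point one invokes the classification of essential annuli: in a Seifert fibered piece such an annulus is isotopic to a vertical or a horizontal one, and in an atoroidal piece the essential annuli are severely restricted. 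A case analysis on the pieces met by $\mathcal{T}$ then lets one further reduce $\abs{\mathcal{T} \cap \mathcal{T}'}$, so $\mathcal{T}$ and $\mathcal{T}'$ can be made disjoint. Each torus of $\mathcal{T}$ now lies in one piece of $M \setminus \mathcal{T}'$, where it is either boundary-parallel --- hence isotopic to a torus of $\mathcal{T}'$ --- or essential, forcing that piece to be Seifert fibered. A final matching argument, using that an essential torus in a Seifert piece may be made vertical and then has Seifert fibered neighborhoods on both sides, combined with the minimality of both cardinalities, shows no tori are stranded inside Seifert pieces and that $\mathcal{T}$ and $\mathcal{T}'$ are isotopic.

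\emph{Main obstacle.} The difficulty is concentrated in the second half of the uniqueness proof: controlling how an essential torus of one minimal system intersects the pieces of another, classifying the essential annuli this produces, and ruling out ``exchangeable'' vertical tori inside Seifert pieces. This is exactly the content of the characteristic submanifold theory of Jaco--Shalen and Johannson --- the Enclosing Theorem and the Annulus--Torus theorems (see \cite{MR520524,MR551744}) --- streamlined later by Neumann--Swarup, with the notes \cite{hatcher2000notes} assembling the pieces. One must also handle with care the small exceptional Seifert manifolds, namely $T^2 \times I$, the twisted $I$-bundle over the Klein bottle, and solid tori, where the decomposition behaves degenerately.
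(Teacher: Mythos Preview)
The paper does not prove this statement at all: it is quoted verbatim as Theorem~1.9 of Hatcher's notes and used as a black box, so there is no ``paper's own proof'' to compare against. Your proposal is therefore not a reconstruction of anything in the article but rather an independent sketch of the classical argument.

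As a sketch, your outline is the standard one and is broadly sound. Existence via a maximal non-parallel essential torus collection (bounded by Kneser--Haken), then passing to a minimal admissible collection, is exactly how Hatcher and others set it up. For uniqueness, minimizing $\abs{\mathcal{T}\cap\mathcal{T}'}$, promoting intersection circles to essential annuli, and invoking the classification of essential annuli in Seifert and atoroidal pieces is the right skeleton. You are also honest that the real work --- ruling out vertical tori stranded inside Seifert pieces and handling the small Seifert exceptions $T^2\times I$, the twisted $I$-bundle over the Klein bottle, and solid tori --- is where the argument needs genuine care; this is precisely the content of the characteristic submanifold theory you cite. One small point: in your existence paragraph, the claim that an essential torus $S$ in a piece $N$ is not parallel in $M$ to any $T_i\in\mathcal{T}_0$ deserves a slightly more careful justification than the product-region sketch you give, since a parallelism region $T^2\times I$ in $M$ could a priori meet several tori of $\mathcal{T}_0$; the fix is routine (incompressible tori in $T^2\times I$ are all parallel), but as written the step is compressed.
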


\begin{definition}
We call the unique isotopy class of decomposition tori (or any representative) in the above theorem the \emph{JSJ-tori} of $M$. 
We call an embedded torus $T$ a \emph{JSJ-torus} if $T$ is isotopic to a torus in the collection of {JSJ-tori}.
We also call the components resulting from decomposing $M$ along the {the JSJ-tori}  \emph{the JSJ-pieces} of $M$. 
We just call an object \emph{JSJ} for short in the above cases if there is no ambiguity.
\end{definition}

\begin{remark}

We can apply the JSJ-decomposition theorem to a manifold with incompressible toroidal boundary by considering its double. 
In particular, knot complements admit JSJ-decompositions. 
See \cite{budney2005jsj} for an explicit description of this JSJ-structure.

\end{remark}

% Let $f: M \to M'$ be a homeomorphism of oriented $3$-manifolds. Then $f$ maps the JSJ-tori (or JSJ-pieces) of $M$ to the JSJ-tori (or JSJ-pieces) of $M'$ bijectively. In other words, $M$ and $M'$ have identical sets of JSJ-pieces. The following lemma is the key idea of the proof of the main Theorem.

We need a criterion on whether certain tori are JSJ.

\begin{prop}[Proposition 1.6.2 of \cite{aschenbrenner3}]
\label{criterion}

Let $M$ be a compact irreducible orientable $3$-manifold with empty or toroidal boundary. Let $\{T_i\}$ be a collection of disjoint embedded incompressible tori in $M$. Then $\{T_i\}$ are the JSJ-tori of $M$ if and only if the following holds: 
\begin{enumerate}

\item each component $\{M_j\}$ of $M\backslash \cup_i T_i$ is atoroidal or Seifert fibered;

\item if  $~T_i$ cobounds Seifert fibered components $M_j$ and $M_k$ (with possibly $j = k$), then their regular fibers do not match; in other words, their Seifert fibered structures can not be glued together along $T_i$ to form a larger one;

\item if a component $M_i$ is homeomorphic to $T^2 \times I$, then $M$ is a torus bundle with only one JSJ-piece.
\end{enumerate}

\end{prop}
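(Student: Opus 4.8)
The plan is to deduce both directions from the existence-and-uniqueness part of the JSJ-decomposition theorem stated above, which furnishes a canonical system of incompressible tori $\mathcal J=\{S_k\}$ that is \emph{minimal} among all systems of incompressible tori in $M$ whose complementary pieces are atoroidal or Seifert fibered. Two standard facts will be used without reproof: in an irreducible orientable $3$-manifold any two disjoint finite families of incompressible tori can be isotoped to be mutually disjoint, the sole exception being mutually parallel families cobounding a $T^2\times I$ (exactly the situation condition (3) is designed to regulate); and an incompressible torus in an atoroidal piece is boundary-parallel there, while one in a Seifert fibered piece is isotopic to a vertical, a horizontal, or a boundary-parallel torus (classification of essential surfaces in Seifert fibered spaces; see e.g.\ \cite{hatcher2000notes}). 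I would isolate the first of these as a preliminary lemma, since its exceptional cases are the source of most of the bookkeeping.

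I would prove necessity as follows. Suppose $\{T_i\}=\mathcal J$. Then (1) is the content of the theorem. For (2): if some $T_i$ cobounded Seifert fibered pieces $M_j$ and $M_k$ (possibly $j=k$) with matching regular fibers, the two Seifert fibrations would assemble into one on $M_j\cup_{T_i}M_k$, and $\mathcal J\setminus\{T_i\}$ would then be a strictly smaller system with atoroidal-or-Seifert-fibered complement, contradicting the minimality, hence uniqueness, of $\mathcal J$. For (3): if some $\mathcal J$-piece were homeomorphic to $T^2\times I$, each of its two boundary tori would be a component of $\partial M$ or a $\mathcal J$-torus glued to a neighbouring $\mathcal J$-piece; two distinct $\mathcal J$-tori at the two ends would be parallel in $M$, and a component of $\partial M$ at one end would make the $\mathcal J$-torus at the other end $\partial$-parallel in $M$ — either way $\mathcal J$ fails to be minimal. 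Hence the two ends are identified, i.e.\ $M$ is a torus bundle whose JSJ system is a single fibre torus and which has one JSJ-piece.

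The sufficiency direction requires more work; here is the plan. Assume $\{T_i\}$ satisfies (1)--(3). Isotope $\bigcup_i T_i$ and $\bigcup_k S_k$ to be disjoint; if this is impossible without a $T^2\times I$ between parallel tori we are in the torus-bundle case, where condition (3) forces $\{T_i\}$ to be a single fibre and hence to equal $\mathcal J$. Otherwise each $S_k$ lies in some $\{T_i\}$-piece $M_j$: if $M_j$ is atoroidal then $S_k$ is $\partial$-parallel in $M_j$ (the alternative $M_j\cong T^2\times I$ returns us to the torus-bundle case), hence isotopic to some $T_i$, since a JSJ torus is never $\partial$-parallel in $M$; if $M_j$ is Seifert fibered, the classification leaves the cases vertical, horizontal, or $\partial$-parallel, where the horizontal case again forces a torus bundle, a non-peripheral vertical torus inside a Seifert fibered submanifold of $M$ cannot be a JSJ torus (by the characterisation of the JSJ system as the canonical tori), and the $\partial$-parallel case gives $S_k$ isotopic to a $T_i$. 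So after isotopy $\mathcal J\subseteq\{T_i\}$. Conversely, if $T_{i_0}\in\{T_i\}\setminus\mathcal J$ then it lies in the interior of some JSJ-piece $N_l$, which is either atoroidal (forcing $T_{i_0}$ to be $\partial$-parallel in $N_l$, so it either is isotopic to a member of $\mathcal J$ — a contradiction — or cuts off a $T^2\times I$ $\{T_i\}$-piece, contradicting (3) outside the torus-bundle case) or Seifert fibered with $T_{i_0}$ vertical (then the two $\{T_i\}$-pieces adjacent to $T_{i_0}$ arise by cutting $N_l$ along a vertical torus, so are Seifert fibered with fibers matching across $T_{i_0}$, contradicting (2)). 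Hence $\{T_i\}\subseteq\mathcal J$, and since neither system contains parallel tori (those would create a $T^2\times I$ piece, excluded by (3) outside the torus-bundle case) these inclusions yield an isotopy $\{T_i\}\simeq\mathcal J$.

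The step I expect to be the main obstacle is the Seifert fibered case in the sufficiency argument: making rigorous that a non-peripheral vertical torus inside a Seifert piece is never a JSJ torus, while a vertical torus that genuinely violates (2) is detected; handling the small exceptional Seifert manifolds and the self-gluing case $j=k$ of condition (2) correctly; and — throughout both directions — keeping meticulous track of the $T^2\times I$ and torus-bundle degeneracies, which is precisely what condition (3) is present to absorb.
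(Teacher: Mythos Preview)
The paper does not supply its own proof of this proposition: it is stated as Proposition~1.6.2 of \cite{aschenbrenner3} and used as a black box, with no argument given in the text. So there is nothing in the paper to compare your proposal against.

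That said, your outline is a reasonable route to the result and broadly matches how such a criterion is usually established (e.g.\ in the cited reference): necessity follows from minimality of the canonical JSJ system, and sufficiency is obtained by putting an arbitrary system satisfying (1)--(3) in general position with the canonical one and arguing piece by piece. One point worth tightening in your sufficiency argument is the sentence ``a non-peripheral vertical torus inside a Seifert fibered submanifold of $M$ cannot be a JSJ torus (by the characterisation of the JSJ system as the canonical tori)''. As written this is close to assuming what you want to prove; the honest way to handle it is to observe that if $S_k$ is vertical and non-peripheral in $M_j$, then removing $S_k$ from the JSJ system still leaves all complementary pieces atoroidal or Seifert fibered (the two sides of $S_k$ in $M_j$ reassemble into the Seifert fibered $M_j$, and the other pieces are unaffected), contradicting minimality of $\mathcal{J}$. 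You also correctly flag the small-Seifert and $T^2\times I$ degeneracies as the places requiring care; in a full write-up those would need to be enumerated explicitly rather than absorbed into parenthetical remarks.
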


% \begin{remark}
% \label{gluing_torus_is_JSJ_remark}
% We can use the above criterion to determine whether certain gluing torus is JSJ. Let $M = M_1 \cup_T M_2$ be a manifold obtained by gluing $M_1$ and $M_2$ along a toroidal boundary. Suppose $M_1$ is not a torus bundle and $M_2$ is Seifert fibered. Suppose further $T$ is incompressible in both $M_1$ and $M_2$. If the JSJ-piece bounded by $T$ in $M_1$ is Seifert fibered, and its regular fibers on $T$ are glued onto the regular fibers of $M_2$, then  $T$ is not a JSJ-torus of $M$. The above criterion tells us that this is the only case where $T$ is not JSJ.

% \end{remark}

The following lemma says that homeomorphic $3$-manifolds must have identical JSJ-pieces.
This is one of the key ideas of this article.
\begin{lemma}[Lemma 2.4 of \cite{MR3943698}]
\label{diff_sfs_piece}
Let $N$, $F$, and $F'$ be compact $3$-manifolds with toroidal boundaries. Suppose $F$ and $F'$ are atoroidal or Seifert fibered. Let $M = N \cup_T F$ and $M' = N \cup_{T'} F'$ be manifolds obtained by gluing along the boundary tori. Suppose further that the gluing tori $T$ and $T'$ are JSJ in $M$ and $M'$. If $F \not\cong F'$, then $M \not\cong M'.$ 
\end{lemma}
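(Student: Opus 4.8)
The plan is to exploit the uniqueness clause of the JSJ-decomposition theorem together with an elementary multiset count of JSJ-pieces. The first task is to pin down the JSJ-structures of $M$ and $M'$ explicitly in terms of that of $N$. Since $F$ is atoroidal or Seifert fibered it contains no essential torus subdividing it, so the only way $F$ could fail to be a single JSJ-piece of $M$ would be for its Seifert structure to merge across $T$ with that of the adjacent piece of $N$; this is exactly what the hypothesis ``$T$ is a JSJ-torus of $M$'' forbids (by Proposition \ref{criterion}(2)). Hence the JSJ-tori of $M$ are precisely $T$ together with the JSJ-tori of $N$, and the JSJ-pieces of $M$ are precisely $F$ together with the JSJ-pieces of $N$; the same reasoning gives that the JSJ-pieces of $M'$ are $F'$ together with the JSJ-pieces of $N$. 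Along the way one checks that each JSJ-torus of $N$ stays incompressible and fiber-mismatched inside $M$: incompressibility is preserved because $T$ is incompressible in $F$, so $\partial F$ is incompressible and gluing $F$ along $T$ compresses nothing; and the fiber mismatch persists because the piece of $N$ meeting $T$ is left intact (it does not absorb $F$, again since $T$ is JSJ).

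Now suppose toward a contradiction that there is an orientation-preserving homeomorphism $\phi\colon M \to M'$. The image under $\phi$ of the JSJ-tori of $M$ is a minimal toral collection splitting $M'$ into atoroidal or Seifert fibered pieces, so by uniqueness it is isotopic to the JSJ-tori of $M'$; post-composing $\phi$ with an ambient isotopy of $M'$, we may assume $\phi$ carries the JSJ-tori of $M$ onto those of $M'$ and hence restricts to orientation-preserving homeomorphisms between the complementary JSJ-pieces. In particular, the multiset of oriented homeomorphism types of JSJ-pieces of $M$ equals that of $M'$. By the previous paragraph these multisets are $\{[F]\}\sqcup\mathcal{P}$ and $\{[F']\}\sqcup\mathcal{P}$ respectively, where $\mathcal{P}$ is the common multiset of oriented homeomorphism types of the JSJ-pieces of $N$ (common because $M$ and $M'$ both contain $N$ with its given orientation). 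Comparing the multiplicity of the type $[F]$ on the two sides forces $[F]=[F']$, i.e. $F\cong F'$, contradicting the hypothesis; therefore $M\not\cong M'$.

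The step I expect to be the main obstacle is the structural bookkeeping in the first paragraph, namely the clean statement that the JSJ-decomposition of $M$ is obtained from that of $N$ by adjoining exactly the one torus $T$ and the one piece $F$. The delicate points there are the incompressibility and fiber-matching checks just mentioned, plus the degenerate possibility that $F$ (or the piece of $N$ abutting $T$) is homeomorphic to $T^2\times I$, which must be addressed via clause (3) of Proposition \ref{criterion}; in the applications this degeneracy does not occur, but it should be noted. Once this structural description is established, the homeomorphism-invariance of the JSJ-data and the one-line multiset comparison close the argument with no further work.
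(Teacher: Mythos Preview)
The paper does not actually prove this lemma; it is quoted verbatim from the author's earlier work \cite{MR3943698} and only cited here. So there is no in-paper argument to compare against. Your proof is the standard one and is essentially correct: once $T$ is known to be a JSJ-torus of $M$, the JSJ-pieces of $M$ are exactly $F$ together with the JSJ-pieces of $N$ (and likewise for $M'$), after which homeomorphism invariance of the multiset of JSJ-pieces forces $[F]=[F']$ by cancellation of the common multiset $\mathcal{P}$.

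One small clarification on the structural step: you phrase it as ``show that the JSJ-tori of $N$ stay JSJ in $M$,'' but the cleaner direction is the reverse. Since $T$ is by hypothesis a JSJ-torus of $M$, take the \emph{actual} JSJ-tori of $M$; they include $T$, and the remaining ones lie in $N\cup F$. Because $F$ is already atoroidal or Seifert fibered and $T$ is JSJ, $F$ is a single JSJ-piece of $M$ and contains no further JSJ-tori; hence all the other JSJ-tori of $M$ lie in $N$ and, by minimality, constitute the JSJ-tori of $N$. This avoids having to re-verify incompressibility and fiber-mismatch of the $N$-tori inside $M$ from scratch. Your acknowledged caveat about the $T^2\times I$ degeneracy (Proposition~\ref{criterion}(3)) is the right thing to flag; as you say, it does not arise in the applications in this paper.
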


We need a description of the JSJ-decomposition of knot complements in $S^3$. 
The following theorem is due to Budney\cite{budney2005jsj}, 
which is based on previous works by Jaco and
Shalen \cite{MR520524}, 
Johannson\cite{MR551744}, 
Bonahon and Siebenmann\cite{bonahonsiebenmann_unpublish}, 
Eisenbud and Neumann\cite{MR817982}, 
and Thurston \cite{MR648524}.
We use the version reformulated by Lackenby.

\begin{figure}[ht]
\includegraphics{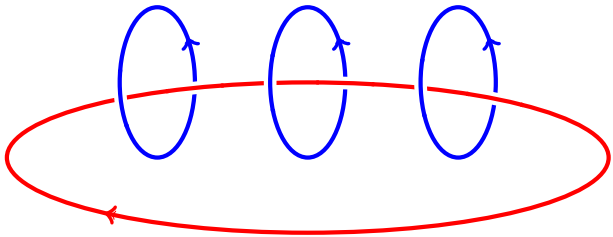}
\caption{ The key-chain link $H_3$. Each `key' corresponds to a prime summand of the composite knot. The distinguished circle corresponds to $\partial E(K)$. The complement of this key-chain link corresponds to the node $S$ in Figure \ref{a_typical_JSJ_tree_of_a_knot}. 
\label{key-chain_3}}
\end{figure}

\begin{theorem}[Theorem 4.1 of \cite{lackenby2017every}, Theorem 4.18 of \cite{budney2005jsj}]
\label{jsj_of_knot_complement}

Suppose $K$ is a knot in $S^3$ such that $S^3 \backslash \text{int}(N(K))$ has at least one JSJ-torus. Let $M$ be a JSJ-piece of $S^3 \backslash \text{int}(N(K))$. Then $M$ has one of the following forms:

\begin{enumerate}

\item an annulus based Seifert fibered space with one singular fiber; 
% in this case, the knot $K$ is a cable of a non-trivial knot; 
we call this space a \emph{cable space};
when $M$ contains $\partial E(K)$, the knot $K$ is a cable knot;

\item a Seifert fibered space $S(0,n +1;)$ with $n \ge 2$;  
% in this case, $K$ is a composite knot; 
this space is the complement of the `key-chain' link $H_n$ (Figure \ref{key-chain_3}) as described in \cite{budney2005jsj}; 
we call this space a \emph{composing space};
note that a JSJ-piece adjacent to $M$ can not be again a composing space; 
when $M$ contains $\partial E(K)$, the knot $K$ is a composite knot;
and we say that the component of $H_n$ corresponding to $\partial E(K)$ is \emph{distinguished};

% a planar surface based Seifert fibered space with at least three boundary components and with no singular fibers; each regular fiber has the meridional slope on $\partial N(K)$; In this case, $K$ is a composite knot; this space is the complement of some `key-chain' link as described in \cite{budney2005jsj}; a JSJ-piece adjacent to $M$ can not be of this form;

\item a hyperbolic manifold which is homeomorphic to the complement of some hyperbolic link $L$ in $S^3$; 
this link becomes a trivial link or the unknot if a particular component is removed; 
we say this component is \emph{distinguished};
when $M$ contains $\partial E(K)$, this \emph{distinguished} component corresponds to $\partial E(K)$;

% the homeomorphism from the JSJ-piece $M$ to $S^3 \backslash \partial N(L)$ sends each slope $p/q$ on $\partial N(K) $ to the slope $p/q$ on  $S^3 \backslash \partial N(L)$ and sends each slope $p/q$ on each of the remaining boundary components to the slope $q/p$ on the corresponding boundary torus of  $S^3 \backslash \partial N(L)$;

\item  a torus knot complement in $S^3$; 
in this case, we have $\partial N(K) \not\subset M$.

\end{enumerate}

\end{theorem}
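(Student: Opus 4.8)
The plan is to assemble the classification from three ingredients --- irreducibility of knot exteriors, so that the JSJ criterion of Proposition~\ref{criterion} applies; the classical structure theory of Seifert fibered $3$-manifolds that embed in $S^3$; and Thurston's hyperbolization theorem for Haken manifolds --- all organized around the companionship (satellite) decomposition of $K$.

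First I would note that $E(K)$ is irreducible (an embedded $2$-sphere bounds a ball in $S^3$ on the side disjoint from $K$), so the JSJ theory applies; and since $E(K)$ has a JSJ-torus by hypothesis, $K$ is a satellite knot whose JSJ-pieces form a rooted tree with the piece containing $\partial N(K)$ at the root. I would then establish a normalization: each JSJ-piece $M$ is homeomorphic, as an abstract manifold, to the exterior $E(L)$ of a link $L \subset S^3$ with a distinguished component $L_0$ --- forced to be $\partial N(K)$ when $M$ is the root --- such that $L \setminus L_0$ is an unlink (possibly a single unknot). One obtains $L$ by keeping $M$ fixed and replacing every companion knot that $K$ carries in the JSJ-subtree below $M$ by a trivial, disjointly embedded unknot: this leaves $M$ unchanged, since $M$ avoids the interiors of those companion solid tori, while turning the companions into the unlink $L \setminus L_0$.

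Next I would analyse the Seifert fibered pieces. By the classical classification of Seifert fibered submanifolds of $S^3$ (see the works cited above), such a piece has orientable genus-zero base orbifold, and embeddability in $S^3$ bounds its numbers of exceptional fibers and boundary tori; together with the adjacency and minimality constraints of the JSJ-decomposition (Proposition~\ref{criterion}) this leaves precisely: the solid torus, which is never a JSJ-piece; the torus knot exterior $S(0,1;\beta_1/\alpha_1,\beta_2/\alpha_2)$, which cannot contain $\partial N(K)$, since otherwise $E(K)$ itself would be Seifert fibered and have no JSJ-torus; the cable space $S(0,2;\beta/\alpha)$, whose presence at the root makes $K$ a cable knot; and the composing space $S(0,n+1;)$ with $n \ge 2$, the exterior of the keychain link $H_n$, whose presence at the root makes $K$ a connected sum. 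A piece adjacent to a composing space is never again a composing space, since the summands labelling the keys are prime knots and, equivalently, since two composing spaces glued along a torus can be given matching regular fibers, so their common torus fails to be a JSJ-torus by Proposition~\ref{criterion}.

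Finally, the atoroidal pieces. An atoroidal JSJ-piece $M$ is not a solid torus, and it is not $T^2 \times I$ (which by Proposition~\ref{criterion} would force $E(K)$ to be a torus bundle, which it is not), so $M$ is not Seifert fibered and is therefore hyperbolic by Thurston's hyperbolization theorem; by the normalization it is the exterior of a hyperbolic link $L$ with a distinguished component whose removal leaves an unlink, and this component corresponds to $\partial N(K)$ when $M$ is the root. I expect the main obstacle to be exactly the normalization step together with the Seifert fibered list --- that is, showing that the knotting of $K$ genuinely localizes in the deeper JSJ-pieces, so that every piece is the exterior of such a link, and pinning down which Seifert fibered manifolds occur --- which is where the cited work of Budney, building on Jaco--Shalen, Johannson, Bonahon--Siebenmann, Eisenbud--Neumann and Thurston, does the essential work; granted these, the remainder is bookkeeping along the satellite tree.
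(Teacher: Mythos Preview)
The paper does not prove this theorem at all: it is quoted verbatim as Theorem~4.1 of Lackenby and Theorem~4.18 of Budney, with attribution in the theorem header and no accompanying proof environment. There is therefore nothing in the paper to compare your argument against; the author treats this classification as an imported black box and only \emph{uses} its conclusions (the list of four piece-types, the composing-space adjacency restriction, and the notion of distinguished component) in the later sections.

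Your sketch is a reasonable outline of how the cited references actually establish the result, and you correctly identify that the substantive work---the ``normalization'' showing each piece is a link exterior with an unknotting distinguished component, and the enumeration of Seifert fibered submanifolds of $S^3$---is imported from Budney and the earlier literature. One logical slip worth flagging: in your final paragraph you write that an atoroidal JSJ-piece which is neither a solid torus nor $T^2\times I$ ``is not Seifert fibered.'' That implication is false as stated---torus knot exteriors and cable spaces are atoroidal \emph{and} Seifert fibered. What you need is the dichotomy the other way around: having already enumerated the Seifert fibered pieces, the \emph{remaining} pieces are atoroidal and non-Seifert, hence hyperbolic by Thurston. Reordering the sentence to make clear you are treating the non-Seifert atoroidal pieces, rather than deducing non-Seifert from atoroidal, would close that gap.
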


The JSJ-decomposition of the complement of a knot $K$ in $S^3$ has a natural graph structure. The vertices correspond to JSJ-pieces, and the edges correspond to the JSJ-tori along which adjacent JSJ-pieces are glued. By the generalized Jordan Curve Theorem(\cite{MR0348781}, also see \cite{budney2005jsj}), each embedded torus in $S^3$ seperates, and hence this graph is acyclic. In other words, this graph is a tree.

% This tree has a canonical root, which corresponds to the JSJ-piece containing $\partial N(K)$. If we perform a Dehn surgery along $K$, then we obtain a tree for the resulting surgered manifold. This tree is basically the same as the tree for $K$, but the root denotes a different JSJ-piece, which is the result of the Dehn filling on the original root. 

\begin{figure}[ht]
\includegraphics{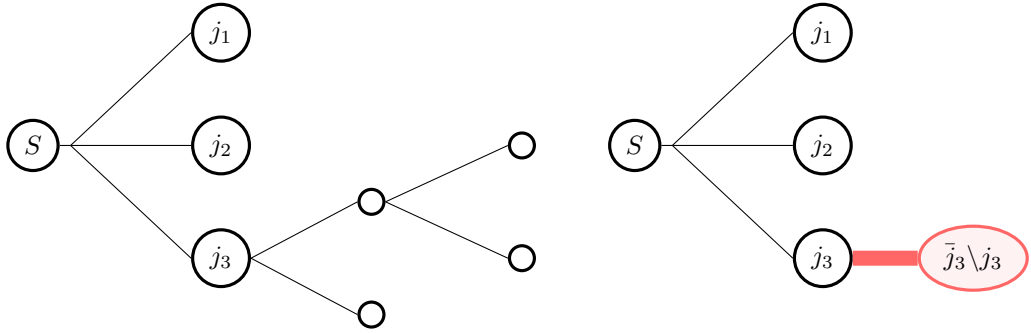}
\caption{\textbf{Left}: The JSJ-tree of a composite knot $J$ with $3$ prime knot summands. The root $S$ is the Seifert fibered space $S(0,3;)$. This composite knot $J$ has three prime summands $J_1, J_2,$ and $J_3$. Each maximal subtree with root labeled by  $j_i$ corresponds to the knot complement $E(J_i)$. In this graph, the prime summands $J_1$ and $J_2$ are non-satellite knots and   their complements have trivial JSJ-decompositions. The prime factor $J_3$ is a satellite knot. Note that the vertex $j_3$ can not be a composing space since $J_3$ is prime by assumption. \textbf{Right}: The same JSJ-tree as the left one, but we use an elliptical node to represent a collection of nodes. The thick edge means that $j_3$ may have multiple subtrees. The notation $\bar{j_3}$ denotes the maximal subtree rooted at $j_3$.
\label{a_typical_JSJ_tree_of_a_knot}}
\end{figure}

\begin{definition}
\label{def:jsj_tree}
We call a graph defined as above a \emph{JSJ-tree}. See Figure \ref{a_typical_JSJ_tree_of_a_knot} for an example.
\end{definition}

% \begin{remark}
% \label{identification_of_JSJ_pieces_in_different_surgeried_mfd}
% Let $S^3_r{(K)}$ and $S^3_s{(K)}$ be two surgered manifolds of a knot $K$, and let $M$ be the JSJ-piece of $S^3 \backslash N(K)$ containing $\partial N(K)$. Then each surgered manifold is a union of $S^3\backslash \left( N(K)\cup M \right)$ with $M(\pm r)$. We can naturally identify the two copies of $S^3\backslash \left( N(K)\cup M \right)$ in each surgered manifold. We also identify the JSJ-pieces (or an union of them) in different copies $S^3\backslash \left( N(K)\cup M \right)$ in this way. \\
% \end{remark}

Certain unions of JSJ-pieces in $E(K)$ can be regarded as knot complements.

\begin{lemma}[cf. Proof of Theorem 1.1 of \cite{lackenby2017every}]
\label{union_of_JSJ_is_knot_complement}
Let $K$ be a non-trivial knot in $S^3$ and $M$ be a union of JSJ-pieces of $E(K)$. 
Suppose the boundary of $M$ is a torus. 
Then $M$ is the complement of some non-trivial knot in $S^3$. 
\end{lemma}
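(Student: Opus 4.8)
The plan is to exhibit $M$ as the closure of one side of its boundary torus inside $S^3$ and then apply classical facts about tori and spheres in the $3$-sphere. First I would note that $M$ is connected: a disconnected union of JSJ-pieces whose total boundary is a single torus would have a closed component, which (as $E(K)$ is connected) would have to be all of $E(K)$ --- impossible, since $E(K)$ has non-empty boundary. Write $T = \partial M$. Since the boundary of any union of JSJ-pieces of $E(K)$ consists of copies of $\partial E(K)$ together with some JSJ-tori, $T$ is either $\partial E(K)$ or a single JSJ-torus of $E(K)$. Because $\text{int}(M)$ is open in $S^3$ and closed in $S^3 \setminus T$, and $T$ separates $S^3$, the submanifold $M$ is the closure of one of the two components of $S^3 \setminus T$. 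In the first case, $T = \partial E(K)$: the JSJ-piece carrying $\partial E(K)$ lies in $M$, and were some JSJ-piece missing from $M$ a JSJ-torus would appear in $\partial M$ in addition to $\partial E(K)$, contradicting that $\partial M$ is a single torus; hence $M = E(K)$, the complement of the non-trivial knot $K$.

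The remaining case is $T$ an interior JSJ-torus, so that $S^3 = A \cup_T M$. Since $M \subseteq E(K)$ we have $\text{int}(N(K)) \subseteq \text{int}(A)$, so the solid torus $N(K)$ sits on the $A$-side. As $T$ is a JSJ-torus it is incompressible in $E(K)$, hence in the JSJ-piece of $E(K)$ adjacent to it inside $M$; since the remaining interior JSJ-tori of $M$ are incompressible, incompressibility propagates and $T$ is incompressible in $M$, so $M$ is not a solid torus. On the other hand, a torus in $S^3$ must compress on at least one side --- otherwise $\pi_1(T) \cong \mathbb{Z}^2$ would inject into $\pi_1(S^3) = 1$ by van Kampen and the loop theorem --- so $T$ compresses on the $A$-side. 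Compressing $A$ along such a disk $D$ (which is non-separating in $A$, since $\partial D$ is essential on $T$) yields a compact manifold embedded in $S^3$ with a single $2$-sphere boundary component, hence a $3$-ball by the Schoenflies theorem in $S^3$; reconstructing $A$ then amounts to identifying two disjoint disks in the boundary of a ball, so $A$ is a solid torus. (Equivalently, one may invoke that every torus in $S^3$ bounds a solid torus on at least one side, ruling out the $M$-side as above.)

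Taking $A$ as a tubular neighbourhood of its core $K'$, we conclude $M = S^3 \setminus \text{int}(A) = E(K')$; and $K'$ is non-trivial, for otherwise $M$ would be a solid torus, contradicting the incompressibility of $T = \partial M$. \textbf{The main obstacle} is the bookkeeping in the interior case --- checking that $M$ is literally the closure of one side of $T$ in $S^3$, that $N(K)$ lies on the other side, and that incompressibility of $T$ survives the amalgamation of the JSJ-pieces of $M$ --- but each of these is routine; the real content is the observation that $M \subset S^3$ is the exterior of an a priori knotted solid torus.
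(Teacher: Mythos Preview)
Your argument is correct and follows essentially the same line as the paper's proof: the torus $\partial M$ is incompressible in $E(K)$, every torus in $S^3$ bounds a solid torus on one side, that side cannot be $M$ by incompressibility, hence $M$ is a non-trivial knot exterior. You simply spell out in detail several points the paper leaves implicit --- connectedness of $M$, the trivial case $T=\partial E(K)$, and the standard derivation of ``every torus in $S^3$ bounds a solid torus'' from the loop theorem and Schoenflies --- whereas the paper cites this last fact as well known and dispatches the whole lemma in four sentences.
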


\begin{proof}
It is well known that each embedded torus in $S^3$ bounds a solid torus. 
This solid torus contains $K$ since $\partial M$ is incompressible in $E(K)$.
By the same argument, 
the other side of this torus, which is $M$, 
is not a solid torus.
Hence $M$ is the complement of a non-trivial knot in $S^3$.
\end{proof}

Now we have described the JSJ-pieces of knot complements.
In order to analyse how these JSJ-pieces fit together, 
we need to parametrize the slopes on their boundary tori.
Our convention is to regard each JSJ-torus as a knot boundary, which is provided by Lemma \ref{union_of_JSJ_is_knot_complement}.

\begin{remark}
\label{non-invertible}
While the orientations of knots do not concern the orientations of their complements,
they are important to connected sums.
However, non-invertible prime summands (of a composite knot $J$) with different orientations do not bother us,
since the slopes at relevant tori are well-defined.
\end{remark}

% Each connected component of $E(J)\backslash S_0$ is homeomorphic to the knot complement of the corresponding prime summand of $J$,
% and two such components are homeomorphic when the corresponding summands are isotopic knots.
% Note that while the orientations of knots do not concern the orientations of complements,
% they are important to connected sums.
% However, non-invertible summands with different orientations do not bother us,
% since the slopes at relevant torus are well-defined.

% Let $K$ be a satellite knot and $M$ be the JSJ-piece of $E(K)$ containing $\partial E(K)$. 
% By Lemma \ref{union_of_JSJ_is_knot_complement}, 
% each boundary torus of $M$ bounds a knot complement in $E(K)$, 
% which specifies slopes on the corresponding torus. 
% We parametrize the slopes on the boundary tori of the JSJ-piece $M$ in this way. 

There is another way to parametrize certain JSJ-tori.
Let $M$ be the JSJ-piece containing $\partial E(K)$.
Suppose $M$ is $S(0,n+1;)$ or a hyperbolic manifold, 
then $M$ is the complement of a key-chain link $H_n$ or of a hyperbolic link. 
We can also use (the components of) this link to parametrize slopes on the boundary tori of $M$ (or of JSJ-pieces adjacent to $M$).

These two parametrizations are related as follows.

 \begin{prop}[\cite{budney2005jsj},\cite{lackenby2017every}]
 \label{parametrizing_slopes_two_methods_relation}
Let $M$ be $S(0,n+1;)$ or a hyperbolic manifold as above. 
Then the slopes defined by the two different parametrizations agree on $\partial E(K)$ 
and are inverse to each other on the remaining boundary components. 
% In addition, the fiber-slopes of $S(0,n+1;)$ are all meridional using the first parametrization. 
\end{prop}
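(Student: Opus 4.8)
The plan is to verify the claimed dictionary one boundary torus of $M$ at a time, the essential ingredient being the explicit description of $M$, and of the reassembly of $E(K)$ from its JSJ-pieces, due to Budney \cite{budney2005jsj} and reformulated by Lackenby \cite{lackenby2017every}. Identify $M$ with $S^3 \setminus N(L)$, where $L = L_0 \cup L_1 \cup \dots \cup L_n$ is the key-chain link $H_n$ (case (2) of Theorem \ref{jsj_of_knot_complement}) or the hyperbolic link $L$ (case (3)), with $L_0$ the distinguished component; under this identification $\partial E(K)$ corresponds to $\partial N(L_0)$, and the remaining JSJ-tori of $M$ are the tori $T_i = \partial N(L_i)$, $i = 1, \dots, n$. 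For $i \ge 1$ let $E_i$ be the union of the JSJ-pieces of $E(K)$ lying on the side of $T_i$ away from $\partial E(K)$; by Lemma \ref{union_of_JSJ_is_knot_complement}, $E_i = E(\kappa_i)$ for a non-trivial knot $\kappa_i \subset S^3$, with meridian--longitude pair $(\mu_i, \lambda_i)$ on $T_i$. The second parametrization uses instead the meridian--longitude pair $(m_j, \ell_j)$ of the link component $L_j$. Since a slope is just an isotopy class of essential simple closed curve and the ``$p/q$'' labelling is linear in the chosen ordered basis, it suffices to prove that $(\mu_K, \lambda_K) = (m_0, \ell_0)$ as slopes on $\partial E(K)$ and $(\mu_i, \lambda_i) = (\ell_i, m_i)$ as slopes on each $T_i$.

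The crux is the exact form of the attaching maps. According to \cite{budney2005jsj,lackenby2017every}, the manifold $E(K)$ is obtained from $M \cong S^3 \setminus N(L)$ by gluing, for each $i$, the knot exterior $E(\kappa_i)$ onto $T_i$ by a homeomorphism $\psi_i \colon \partial E(\kappa_i) \to T_i$ with $\psi_i(\mu_i) = \ell_i$ and $\psi_i(\lambda_i) = m_i$. This records the ``winding number one'' feature of the composing space (resp. the distinguished-component structure of a hyperbolic piece): once $E(\kappa_i)$ is inserted, the meridian disk of the old solid torus $N(L_i)$ becomes the boundary of a Seifert surface of $\kappa_i$, and conversely. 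Granting this, the statement on each $T_i$ is exactly the pair of identities $\psi_i(\mu_i) = \ell_i$, $\psi_i(\lambda_i) = m_i$, which is precisely the assertion that the two parametrizations are inverse to each other there.

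It remains to identify $(\mu_K, \lambda_K)$ with $(m_0, \ell_0)$ on $\partial E(K)$. Because $\psi_i(\lambda_i) = m_i$, a Mayer--Vietoris computation for $E(K) = M \cup_{T_1} E(\kappa_1) \cup \dots \cup_{T_n} E(\kappa_n)$ shows that $H_1(E(K))$ is $H_1(M) / \langle m_1, \dots, m_n \rangle$, infinite cyclic generated by $m_0$, where $H_1(M) = H_1(S^3 \setminus N(L))$ is free on the meridians $m_0, \dots, m_n$. Since $L \setminus L_0$ is a trivial link, $\ell_0$ equals $\sum_{j \ge 1} \mathrm{lk}(L_0, L_j)\, m_j$ in $H_1(M)$, hence is null-homologous in $E(K)$; as the preferred longitude is the unique null-homologous slope on $\partial E(K)$, we get $\ell_0 = \lambda_K$. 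For the meridian: filling $\partial E(K) = \partial N(L_0)$ along $m_0$ turns $M$ back into $S^3 \setminus N(L_1 \cup \dots \cup L_n)$, the complement of the unlink, in which each slope $\ell_i$ bounds a disk; since $E(\kappa_i)$ is attached with $\psi_i(\mu_i) = \ell_i$, gluing in all of the $E(\kappa_i)$ amounts to performing, inside disjoint balls, the $\mu_i$-Dehn fillings of the exteriors $E(\kappa_i)$, each of which yields $S^3$. Hence $E(K)(m_0) \cong S^3$, and since the meridian is the only slope on a non-trivial knot in $S^3$ admitting an $S^3$ Dehn filling (Gordon--Luecke), $m_0 = \mu_K$.

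The only step that is not elementary homology is the description of the maps $\psi_i$ used in the second paragraph, and this is genuinely the heart of the matter: the homology of $M$ alone pins down each $\psi_i$ only up to an integral framing on $T_i$, and it is exactly the combinatorics of the key-chain link --- each key punctures the spanning disk of the ring once --- together with the analogous property of a distinguished component of a hyperbolic JSJ-piece, that fixes that framing so as to produce the meridian--longitude interchange. I do not reprove this here; it is contained in \cite{budney2005jsj} and its reformulation \cite{lackenby2017every}. With it in hand the remaining steps above are routine; for a composite knot one may alternatively organize the $T_i$ step as an induction up the JSJ-tree with the distinguished torus $\partial E(K)$ as the base case.
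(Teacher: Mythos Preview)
Your argument is correct and ultimately rests on the same input as the paper's: the explicit form of the splicing maps $\psi_i$ from \cite{budney2005jsj,lackenby2017every}, which you rightly identify as the one non-elementary step and do not reprove. The paper's own proof is in fact much shorter than yours --- it simply invokes the splicing definition and the corresponding statements in \cite{budney2005jsj,lackenby2017every} for \emph{all} boundary components, including $\partial E(K)$.

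The genuine difference is that you add a self-contained verification on $\partial E(K)$: a Mayer--Vietoris computation to identify $\ell_0$ with $\lambda_K$, and a Dehn-filling argument together with Gordon--Luecke (Theorem~\ref{gordon_luecke}) to identify $m_0$ with $\mu_K$. This is sound but heavier than needed: in Budney's construction $K$ is \emph{defined} as the image of $L_0$ under the splicing, so $m_0 = \mu_K$ is immediate from the construction rather than something to be recovered a posteriori via the knot complement theorem. Your homological identification of the longitude is clean; the meridian step via Gordon--Luecke is valid but overkill. One cosmetic remark: the clause ``Since $L\setminus L_0$ is a trivial link'' is not what justifies the formula $\ell_0 = \sum_{j\ge 1}\mathrm{lk}(L_0,L_j)\,m_j$ (that holds for any link exterior); the triviality of $L\setminus L_0$ is only needed in your meridian argument, where it ensures each $\ell_i$ bounds a disk after the $m_0$-filling.
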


\begin{proof}

This is a direct consequence of the definition of the `splicing' of \cite{budney2005jsj}. 
See Example $4.9$ of \cite{budney2005jsj}. 
Also see Theorem 4.2 of \cite{lackenby2017every}. 
Note that the torus $\partial E(K)$ corresponds to the distinguished component of the key-chain link $H_n$ or of the hyperbolic link. 
On the other boundary tori, 
the meridians (or longitudes) of the connected components of $E(K)\backslash M$ correspond to the longitudes (or meridians) of the remaining components of $H_n$ or of the hyperbolic link.
\end{proof}

The following two lemmas describe the fiber-slopes of Seifert fibered JSJ-pieces.

\begin{lemma}[Lemma 2.8 of \cite{MR3943698} and cf. Lemma 7.2 of \cite{gordon1983dehn}]
\label{cable_space_fiber_slope}
Suppose $K$ is a cable knot $C_{p',q'}(K')$ in $S^3$. 
% We can regard $K$ to be a $p'/q'$-curve inside $N(K')$. Let $N(K)$ be a tubular neighborhood of $K$ contained in $N(K')$,   
then the closure of  $N(K') \backslash N(K)$ is a Seifert fibered space. 
Each of its regular fiber has slope $p'/q'$ at $\partial N(K')$ and $p'q'/1$ at $\partial N(K)$.

\end{lemma}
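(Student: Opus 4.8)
The statement to prove is Lemma~\ref{cable_space_fiber_slope}, which asserts that for a cable knot $K = C_{p',q'}(K')$, the closure of $N(K') \setminus N(K)$ is Seifert fibered, with regular fiber of slope $p'/q'$ at $\partial N(K')$ and slope $p'q'/1$ at $\partial N(K)$.

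\medskip

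The plan is to work directly from the definition of the cable knot inside a standardly embedded solid torus. First I would set up coordinates: let $V = N(K')$ be the solid torus neighborhood of $K'$, with meridian $\mu'$ and preferred longitude $\lambda'$ on $\partial V$, and recall that $C_{p',q'}(K')$ is by definition the image under a faithful embedding $V \hookrightarrow S^3$ (following $K'$) of the $(p',q')$-torus knot sitting on the boundary of a slightly smaller concentric solid torus $V_0 \subset \text{int}(V)$. The region $W := V \setminus \text{int}(V_0)$ is homeomorphic to $T^2 \times I$, and the curve $C := C_{p',q'}(K')$ lies on the middle torus $T^2 \times \{1/2\}$ as a $(p',q')$-curve. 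The key structural fact is that $W \setminus \text{int}(N(C))$ — equivalently the closure of $N(K') \setminus N(C)$ once we identify $N(C) = N(K)$ — carries a Seifert fibration in which the regular fibers are parallel copies of the $(p',q')$-curve on the nested tori, together with one exceptional fiber coming from the core of $V_0$ (of multiplicity $|q'|$) or the "outer" core direction. Concretely, this is the well-known cable space $S(0,2; \text{something})$; producing the fibration explicitly amounts to filling the $T^2 \times I$ with $(p',q')$-curves and checking it extends over the deleted neighborhood of $C$.

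\medskip

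With the fibration in hand, the two slope computations are essentially bookkeeping. On $\partial V = \partial N(K')$, the regular fiber is a $(p',q')$-curve relative to $(\mu', \lambda')$ by construction, so its slope is $p'/q'$; this is immediate. On $\partial N(K) = \partial N(C)$, I need the slope of the regular fiber in terms of the meridian $\mu$ and preferred longitude $\lambda$ of $K = C_{p',q'}(K')$ in $S^3$. The regular fiber here is a curve on $\partial N(C)$ isotopic in $S^3 \setminus C$ to $C$ pushed off along the middle torus — i.e. a longitude of the pattern torus. The point is that this pushoff is \emph{not} the preferred longitude $\lambda$: its linking number with $C$ in $S^3$ equals $p'q'$ (the self-linking / framing induced by the torus on which the cable sits, computed using that $C$ is a $(p',q')$-curve on a torus bounding $V_0$ on one side). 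Hence in $(\mu,\lambda)$-coordinates the fiber is $p'q' \cdot \mu + 1 \cdot \lambda$, giving slope $p'q'/1$. The main obstacle is getting the orientation/sign conventions and the $T^2\times I$ framing exactly right so that the linking number comes out to $p'q'$ rather than $\pm p'q'$ or $p'q' \pm 1$; the cleanest way is to compute in homology: $H_1$ of the cable space is generated by the images of $\mu'$, $\mu$, and the fiber, and one reads off the relation from the Seifert invariants, cross-checking against the surjection $H_1(\partial N(C)) \to H_1(S^3 \setminus C)$ which kills $\lambda$.

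\medskip

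Since this is cited verbatim as Lemma 2.8 of \cite{MR3943698} (cf. Lemma 7.2 of \cite{gordon1983dehn}), I would expect the write-up to be short: recall the concentric-solid-tori description, invoke the standard Seifert fibration of a cabled $T^2 \times I$, and then do the two-line homological slope computation, deferring the classical details to Gordon's paper. I do not anticipate any genuinely hard step beyond the sign conventions noted above.
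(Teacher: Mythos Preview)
The paper does not actually prove this lemma: it is stated with the bracketed attribution to \cite{MR3943698} and \cite{gordon1983dehn} and then immediately followed by the next lemma, with no proof environment in between. So there is nothing to compare your argument against beyond the citation itself, and you have already anticipated this in your final paragraph.

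Your sketch is the standard argument and is correct in outline. One small wording issue: when you describe $W \setminus \text{int}(N(C))$ with $W = V \setminus \text{int}(V_0) \cong T^2 \times I$, that space has three boundary tori, not two; the object in the lemma is $\overline{N(K') \setminus N(K)} = V \setminus \text{int}(N(C))$, i.e.\ the full solid torus with the cable drilled out. The Seifert fibration is obtained by foliating $V$ by $(p',q')$-curves on concentric tori, so the core of $V$ (not of $V_0$) is the unique singular fiber of multiplicity $|q'|$, and removing the regular fiber $C$ yields the annulus-based space $S(0,2;\ast)$. With that correction your two slope computations go through exactly as you wrote: the fiber is tautologically a $(p',q')$-curve on $\partial N(K')$, and on $\partial N(K)$ the torus-framing pushoff of $C$ has linking number $p'q'$ with $C$, hence represents $p'q'\,\mu + \lambda$ in the preferred $(\mu,\lambda)$-basis.
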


\begin{lemma}[\cite{budney2005jsj}]
\label{composite_slope}
Let $J$ be a composite knot and $J_i$ be a prime summand.
Suppose $M$ is the JSJ-piece of $E(J)$ containing $\partial E(J)$. 
Then the regular fibers of $M$ has  meridional slopes at $\partial E(J_i)$ and at $\partial E(J)$.

\end{lemma}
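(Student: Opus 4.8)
The plan is to exhibit the composing space $M$ concretely inside $E(J)$ and then read the fibre slopes off the model. Recall first that, since $M$ has the form $S(0,n+1;)$, it is fibre-preservingly homeomorphic to $P\times S^1$, where $P$ is a sphere with $n+1$ holes; write $f$ for the class of a regular fibre $\{\mathrm{pt}\}\times S^1$ and $c_0,\dots,c_n$ for the boundary circles of a section $P\times\{\mathrm{pt}\}$, so that $c_0+c_1+\cdots+c_n=0$ in $H_1(P)$ for suitable orientations. Label the boundary tori so that $\partial_iM=c_i\times S^1$, with $\partial_0M=\partial E(J)$ and, for $1\le i\le n$, $\partial_iM$ glued to $\partial E(J_i)$ along the relevant JSJ-torus (cf.\ Theorem \ref{jsj_of_knot_complement} and Lemma \ref{union_of_JSJ_is_knot_complement}). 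The lemma then reduces to two slope-matching statements: under the gluing $\partial_iM\cong\partial E(J_i)$ the fibre $f$ is identified with the meridian $\mu_{J_i}$, and on $\partial_0M$ the fibre $f$ is isotopic to the meridian $\mu_J$.

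For the identification along the $\partial E(J_i)$ I would use the standard connected-sum model: present $J$ as a cyclic chain of $1$-string tangles $\tau_1,\dots,\tau_n$ (with $\tau_i$ the tangle of $J_i$) joined by trivial arcs, so that the skeleton of this chain is an unknot, and cut $E(J)$ along the swallow--follow tori $T_i=\partial N(B_i\cup(\text{pushoff of }J\setminus\tau_i))$, pushed off $N(J)$. Cutting along the $T_i$ splits off the pieces $E(J_i)$ and leaves the central piece $M$; the product structure on the exterior of the chain (a once-punctured-disk $\times\, S^1$ suitably thickened by the outer solid torus $S^3\setminus N(\text{skeleton})$ and by the complementary slabs of the tangle balls) exhibits $M\cong P\times S^1$ with fibre $f$ equal to the $S^1$-factor, and shows that a meridian disk of $N(\tau_i)$ pushed out to $T_i$ is exactly a regular fibre; equivalently $\mu_{J_i}$ is identified with $f$. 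This is the step needing the most care: one must confirm that the Seifert fibration the JSJ-decomposition sees on $M$ is the product fibration of this model (which follows since the Seifert structure on $S(0,n+1;)$ with $n+1\ge 3$ is unique up to isotopy), and keep track of orientations so that the matched slope is the meridian itself rather than some $\mu_{J_i}+k\lambda_{J_i}$. This is also precisely the content of Budney's ``splicing'' description \cite{budney2005jsj}, where neighbouring pieces are glued by matching meridians to the $S^1$-fibre by construction, together with Proposition \ref{parametrizing_slopes_two_methods_relation}.

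Finally, for the slope on $\partial E(J)$, the class $[f]$ generates $H_1(E(J))\cong\mathbb{Z}$ (the same computation that identifies $[\mu_J]$ with $[\mu_{J_i}]$ for every $i$), so $f$ is isotopic on $\partial E(J)$ to $\mu_J+k\lambda_J$ for some integer $k$, and it remains to see $k=0$. For this I would use that the section curve satisfies $c_0=-(c_1+\cdots+c_n)$, that in the model each $c_i$ is the longitude $\lambda_{J_i}$, and that $\lambda_J=\lambda_{J_1}+\cdots+\lambda_{J_n}$, so $c_0$ is the longitude $\lambda_J$; since $\{f,c_0\}$ is a basis of $H_1(\partial E(J))$ with $f$ primitive and $c_0=\lambda_J$, the fibre $f$ is $\pm\mu_J+k\lambda_J$, and the remaining integer $k$ is pinned down because $f$ cobounds a vertical annulus $(\text{arc in }P)\times S^1$ in $M$ with a meridian of one of the summands, while the summand meridians are glued consistently across the swallow--follow tori. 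The only real obstacle here is the orientation and slope bookkeeping in these gluings; once the model is set up carefully both claims are immediate, and indeed the entire statement can alternatively just be quoted from \cite{budney2005jsj} and \cite{lackenby2017every}.
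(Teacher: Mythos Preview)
Your argument is correct, but it is considerably more hands-on than the paper's. The paper simply identifies $M$ with the key-chain link complement $E(H_n)$, notes that in that model the regular fibre is the meridian of the distinguished component and the longitude of each non-distinguished component, and then invokes Proposition~\ref{parametrizing_slopes_two_methods_relation} once: the two parametrisations agree on $\partial E(J)$ (so fibre $=$ meridian of $J$) and are inverse on the other boundary tori (so longitude of the key $\mapsto$ meridian of $J_i$). That is the whole proof.

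You reach the same conclusion by building the composing space explicitly from the swallow--follow tori and doing the homological bookkeeping on the product $P\times S^1$ by hand. This is a genuinely different route: you never need the key-chain link or Proposition~\ref{parametrizing_slopes_two_methods_relation} as a black box, and your argument is more self-contained. The cost is that the ``pin down $k$'' step on $\partial E(J)$ is exactly where the orientation/slope bookkeeping you flag as delicate actually lives, whereas in the paper's approach that step is absorbed into Proposition~\ref{parametrizing_slopes_two_methods_relation}. You even remark in passing that the whole thing follows from Budney's splicing description together with Proposition~\ref{parametrizing_slopes_two_methods_relation}; that aside \emph{is} the paper's proof, and is the shorter way to write it up.
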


\begin{proof}

The Seifert fibered space $S(0,n+1;)$ is the link complement of $H_n$ in $S^3$. The boundary component of $M$ containing $\partial E(J)$  corresponds to the distinguished component of $H_n$. The regular fibers on $\partial E(J)$ are the meridians of this distinguished component, and the regular fibers on $\partial E(J_i)$ are the longitudes of the components of $H_n$ corresponding to $J_i$. Then the statement  follows from Proposition \ref{parametrizing_slopes_two_methods_relation}.
\end{proof}

% If we have a map from $S^3_r{(K)}$ to $S^3_s{(K)}$, then it is possible to have a well-defined map $H^l$ when we restrict $H$ at some `good' parts of $S^3\backslash \left( N(K)\cup M \right)$ such that the image of $H$ at each step is also contained in $S^3\backslash \left( N(K)\cup M \right)$. 

% \end{remark}

\section{homeomorphisms of JSJ-pieces}

Let $J$ be a composite knot and $M$ be the JSJ-piece containing $\partial E(J)$.
By Theorem \ref{jsj_of_knot_complement},
% $M$ is a Seifert fibered space with a planar base and with no singular fiber.
% This Seifert fibered space is denoted by $S(0,n + 1;)$, 
% where $n$ is the number of prime summands of $J$. 
$M$ is a composing space $S(0,n + 1;)$.
The fiber-slope of this space is meridional on $\partial E(J)$, 
and the $\alpha/\beta$-slope Dehn filling on $\partial E(J)$ replaces $S(0,n + 1;)$ with $S(0,n; \alpha/\beta)$.

% The outline of the proof of the main theorem is the following. In most cases, each $r$-surgery and $-r$-surgery produce different JSJ-pieces, and the theorem follows from Lemma \ref{diff_sfs_piece}. In the remaining cases, the JSJ-pieces of the oppositely surgered manifolds are the same. However, we can show that these pieces are glued differently. We need a description of the self-homeomorphisms of the JSJ-pieces.\\

% First we recall the following result for hyperbolic JSJ-pieces.

% \begin{theorem}[Corollary 5.7.4 of \cite{thurston2002notes}]
% \label{hyperbolic_finite_order_isometry}
% Let $M$ be the complement of a hyperbolic link. Then each self-homeomorphism of $M$ has finite order. 
% \end{theorem}

In this section,
we describe the self-homeomorphisms of a Seifert fibered space with boundary.

\begin{definition}
\label{def_annulus_twist}
 We call a self-homeomophism of a $3$-manifold a \emph{vertical Dehn twist} 
 if it is the identity outside a tubular neighborhood of an essential annulus (or of a torus). 
 We call such an annulus (or a torus) a \emph{twist annulus (or a twist torus)}.
\end{definition}

\begin{theorem}[cf. \cite{MR551744}, Section 25]
\label{mapping_class_group_of_sfs}
Let $M$ be a Seifert fibered space with non-empty boundary. Suppose the base orbifold is oriented and the Seifert fibered structure of $M$ is unique. Then the mapping class group of $M$ is generated by vertical Dehn twists and homeomorphisms of the base space which maps singular points to singular points of the same multiplicity.
The twist annuli are essentially those connecting distinct boundary components, and the twists along tori have no effect on the boundary.
\end{theorem}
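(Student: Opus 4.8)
The plan is to run the classical fibered argument (Waldhausen; Johannson, Section~25), organized around a single homomorphism out of the mapping class group $\mathrm{MCG}(M)$. First I would use the uniqueness of the fibration: any self-homeomorphism $f$ of $M$ carries the Seifert fibration to an isotopic one, so after composing $f$ with the ambient isotopy realizing this we may assume $f$ is fiber-preserving. Such an $f$ descends to a homeomorphism $\bar f$ of the base $2$-orbifold $B$, and since the multiplicity of a singular fiber is a local invariant of the fibration, $\bar f$ carries cone points to cone points of the same order. This gives a homomorphism $\Phi\colon \mathrm{MCG}(M)\to \mathrm{MCG}^{\mathrm{orb}}(B)$, where the target is the group of isotopy classes of such orbifold homeomorphisms of $B$.

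Next I would handle the image and the kernel of $\Phi$ separately. Since $\partial M\neq\emptyset$ the base $B$ is a surface with nonempty boundary, so the fibration is trivial over the complement $B_0$ of the singular points; a class of $\mathrm{MCG}^{\mathrm{orb}}(B)$ then lifts to a fiber-preserving homeomorphism of $M$ by taking a product with $\mathrm{id}_{S^1}$ over $B_0$ and extending fiber-preservingly over the fibered solid tori about the singular fibers (when $\bar f$ interchanges cone points one also uses that the corresponding Seifert invariants agree, correcting by a vertical twist if needed). These lifts are the ``homeomorphisms of the base space'' generators, so it remains to generate $\ker\Phi$. An element of the kernel is represented by a fiber-preserving $f$ with $\bar f$ isotopic --- hence, after a further isotopy, equal --- to $\mathrm{id}_B$, so that $f$ preserves each fiber setwise.

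The core step is to compute $\ker\Phi$ by obstruction theory over $B$. Restricted to one fiber, such an $f$ is a homeomorphism of $S^1$ homotopic to $\pm\mathrm{id}$; orientability of $M$ together with the orientation of $B$ forces the $+$ case (the orientation-reversing case arises only from orientation-reversing self-homeomorphisms of $M$ and is treated in parallel). Regarding $f$ as a section of the associated bundle over $B$ with fiber $\mathrm{Homeo}^+(S^1)\simeq S^1$, its fiberwise-isotopy class relative to the identity section is measured by an element of $H^1(B;\mathbb{Z})$; as $B$ is a surface with boundary this is a free abelian group, and one can take a basis dual to a set of interior loops of $B$ together with a spanning family of arcs joining distinct boundary circles. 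The basis class dual to an interior loop $\gamma$ is realized by the vertical Dehn twist along the vertical torus over $\gamma$, and the basis class dual to an arc between two boundary circles is realized by the vertical Dehn twist along the vertical annulus over that arc. Combined with the previous paragraph, this produces the stated generating set.

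Finally I would read off the boundary assertions. A vertical Dehn twist along a vertical torus is supported over an interior loop of $B$, hence in the interior of $M$, so it restricts to the identity on $\partial M$ --- the ``twists along tori have no effect on the boundary'' clause. The twist annuli occurring in the chosen basis join distinct boundary components; moreover a vertical annulus over an arc with both endpoints on one boundary circle represents an $H^1(B;\mathbb{Z})$-class that is already a combination of interior-loop classes and distinct-component-arc classes, so the twist along it is, up to isotopy and $\partial$-parallel corrections, a product of the generators above --- the ``essentially those connecting distinct boundary components'' clause. \textbf{The main obstacle} will be the rigorous form of these last two paragraphs: identifying $\ker\Phi$ with $H^1(B;\mathbb{Z})$ in the presence of cone points (where twists along vertical tori can satisfy extra relations coming from the fibered solid tori), determining precisely which base homeomorphisms lift, and carrying out the bookkeeping that reduces every twist annulus to annuli joining distinct boundary tori while keeping track of the induced homeomorphisms of $\partial M$. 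The remaining points are routine once the uniqueness of the fibration has been used.
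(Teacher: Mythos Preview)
The paper does not give a proof of this theorem at all: it is stated with the attribution ``cf.\ \cite{MR551744}, Section~25'' and used as a black box, so there is no in-paper argument to compare against. Your proposal is therefore not competing with anything the author wrote; it is an attempt to reconstruct Johannson's result itself.

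As a sketch of that reconstruction your outline is the standard one and is essentially sound: use uniqueness of the fibration to reduce to fiber-preserving maps, project to the orbifold mapping class group of the base, and identify the kernel with fiberwise rotations classified by $H^1(B;\mathbb{Z})$, realized geometrically by vertical Dehn twists along vertical tori and annuli. Your reading of the two boundary clauses is also correct. The caveats you flag yourself are the genuine ones --- in particular, the relations among vertical torus twists coming from the fibered solid tori over cone points (a twist along the torus bounding such a solid torus has finite order), and the bookkeeping showing that an annulus with both ends on one boundary component can be traded for the other generators. None of these are fatal; they are exactly the details Johannson carries out in Section~25, so if you were writing this up you would either fill them in or, as the paper does, simply cite that source.
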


\begin{remark}
We can assume the orientation-preserving homeomorphisms of the base orbifold restrict to the identity on each boundary component, up to permutations of these boundary circles. This is because the group Homeo$^+(S^1)$ deformation retracts to the group of rotations of $S^1$. 
\end{remark}

\begin{prop}[cf. Proposition 2.1 of \cite{hatcher2000notes}]
\label{twist_cor}
The Seifert fibered spaces $S(0,n;\alpha_1/\beta_1)$ and $S(0,n;\alpha_2/\beta_2)$ ($n \ge 1)$ are homeomorphic as oriented manifolds iff $\alpha_1/\beta_1 \equiv \alpha_2/\beta_2 \pmod{1}$. In addition, any orientation-preserving homeomorphism has the effect of a sequence of non-trivial Dehn twists along the fiber direction on at least one boundary component.
\end{prop}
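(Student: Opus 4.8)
The plan is to read off both directions from the classification of Seifert fiberings (Proposition~2.1 of \cite{hatcher2000notes}) together with the mapping class group description in Theorem~\ref{mapping_class_group_of_sfs}. For the ``if'' direction, suppose $\alpha_1/\beta_1 \equiv \alpha_2/\beta_2 \pmod{1}$; after normalising we may assume $\alpha_1 = \alpha_2 = \alpha$ and $\beta_2 = \beta_1 + k\alpha$ with $k \in \mathbb{Z}$. I would exhibit the homeomorphism directly, by presenting $S(0,n;\alpha/\beta_1)$ as a fibered solid torus $V$ around the singular fiber with $n-1$ open fibered solid tori (neighbourhoods of regular fibers) removed, and then applying a vertical Dehn twist (Definition~\ref{def_annulus_twist}) supported on a fibered annulus joining $\partial V$ to one of the other boundary components. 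Such a twist re-chooses the section over the base, hence replaces $\alpha/\beta$ by $\alpha/(\beta \pm \alpha)$, at the cost of a single fiber-direction Dehn twist on that second boundary torus; composing $|k|$ such twists (suitably oriented) produces the desired orientation-preserving homeomorphism onto $S(0,n;\alpha/\beta_2)$.

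For the ``only if'' direction, let $f \colon S(0,n;\alpha_1/\beta_1) \to S(0,n;\alpha_2/\beta_2)$ be an orientation-preserving homeomorphism. When $n \ge 2$ the base orbifold is planar with at least two boundary circles, the Seifert fibered structure is unique up to isotopy, and $f$ may be isotoped to be fiber-preserving; it then induces a homeomorphism of base orbifolds carrying the cone point of order $\alpha_1$ to that of order $\alpha_2$, so $\alpha_1 = \alpha_2 =: \alpha$. By Theorem~\ref{mapping_class_group_of_sfs} and the remark after it, up to isotopy $f$ is a product of vertical Dehn twists and a base homeomorphism that is the identity near the cone point and near each boundary circle; tracking the effect on a chosen section shows that such a product changes $\beta_1/\alpha$ only by an integer, so $\alpha_1/\beta_1 \equiv \alpha_2/\beta_2 \pmod{1}$. (For $n = 1$ the manifold is a solid torus and the statement should be understood for the given fibering; only $n \ge 2$ is needed in the sequel, since filling one boundary circle of a composing space $S(0,m+1;)$ with $m \ge 2$ yields $S(0,m;\alpha/\beta)$.)

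For the final assertion, note that in the product above every vertical Dehn twist is supported on an annulus joining two distinct boundary components (Theorem~\ref{mapping_class_group_of_sfs}), so it acts on precisely those two boundary tori as a power of the fiber-slope Dehn twist and trivially elsewhere, while the base homeomorphism acts trivially on all of $\partial S(0,n;\alpha_1/\beta_1)$; hence $f$ restricted to the boundary is componentwise a power of the fiber-direction twist, and the bookkeeping from the ``if'' direction forces this power to be nonzero on at least one component whenever $\beta_1 \ne \beta_2$. I expect the main obstacle to be the two standard but delicate ingredients --- that $f$ is isotopic to a fiber-preserving map, and that the ambiguity of the singular-fiber coefficient on a bounded Seifert fibered space (where no global Euler number is available) is precisely by integers absorbed into boundary framings --- which I would invoke from \cite{hatcher2000notes} and \cite{MR551744} rather than redevelop here.
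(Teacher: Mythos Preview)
Your proposal is correct and follows essentially the same route as the paper: both construct the explicit homeomorphism via vertical Dehn twists along an annulus from the singular fiber neighborhood to a boundary component, invoke Hatcher's Proposition~2.1 for the classification, and then use Theorem~\ref{mapping_class_group_of_sfs} to control the boundary behavior of an arbitrary homeomorphism. The paper's version is terser and makes explicit the decomposition of an arbitrary $f$ as the specific constructed homeomorphism followed by an automorphism of $S(0,n;\alpha_2/\beta_2)$ (so that Theorem~\ref{mapping_class_group_of_sfs}, which is stated for \emph{self}-maps, applies cleanly), whereas you track the twist bookkeeping more directly; you may want to phrase your final paragraph in those terms to avoid the appearance of applying Theorem~\ref{mapping_class_group_of_sfs} to a map between two a priori different spaces.
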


\begin{proof}
An homeomorphism from $S(0,n;\alpha_1/\beta_1)$ to  $S(0,n;\alpha_2/\beta_2)$ is induced by vertical Dehn twists such that at least one twist annulus connects the tubular neighborhood of the fiber of type $\alpha_1/\beta_1$ and a boundary component of the Seifert fibered space $S(0,n;\alpha_1/\beta_1)$. (Strictly speaking, these particular vertical Dehn twists are defined for $S(0,n;)$ and the homeomorphism extends to a map from $S_1$ to $S_2$.) In particular, this homeomorphism restricts to a Dehn twist along a regular fiber on some boundary torus. Since any orientation-preserving homeomorphism is the map above composed with an automorphism of $S(0,n;\alpha_2/\beta_2)$, the second statement follows from Theorem \ref{mapping_class_group_of_sfs}.
\end{proof}

When dealing with homeomorphisms from one manifold to another, we need to know how the slopes on some JSJ-tori are mapped. 
When the slopes are parametrized by knot complements bounded by the relevant JSJ-torus, 
orientation-preserving homeomorphisms preserve the slopes due to the following theorem.

\begin{theorem}[The Knot Complement Theorem, \cite{gordon1989knots}]

\label{gordon_luecke}
Two knots are isotopic if and only if their complements in $S^3$ are homeomorphic as oriented manifolds. In addition, each such homeomorphism sends meridians to meridians and longitudes to longitudes.

\end{theorem}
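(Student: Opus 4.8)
The ``if'' direction is immediate --- isotopic knots have ambient isotopic complements, and ambient isotopies preserve orientation --- so all the content lies in the ``only if'' direction and the addendum, and the plan is to reduce everything to one deep statement about Dehn surgery on knots and then quote that statement. Let $h\colon E(K_1)\to E(K_2)$ be an orientation-preserving homeomorphism, and write $\mu_i,\lambda_i\subset\partial E(K_i)$ for the meridian and preferred longitude of $K_i$. The boundary restriction $h|_{\partial}$ carries $\mu_1$ to some slope $\gamma$ on $\partial E(K_2)$, and I would split into the cases $\gamma=\mu_2$ and $\gamma\ne\mu_2$.

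\textbf{The case $\gamma=\mu_2$.} Here $h|_{\partial}$ takes the meridian-disk boundary $\mu_1$ of $N(K_1)$ to the meridian-disk boundary $\mu_2$ of $N(K_2)$, so $h$ extends over the solid tori $N(K_i)$ to an orientation-preserving homeomorphism $\hat h\colon S^3\to S^3$ with $\hat h(K_1)=K_2$. Since every orientation-preserving self-homeomorphism of $S^3$ is isotopic to the identity, an ambient isotopy from $\hat h$ to $\mathrm{id}_{S^3}$ carries $K_1$ to $K_2$, so the two knots are isotopic. For the addendum: $\hat h$ sends $\mu_1$ to $\mu_2$ by construction, and it sends $\lambda_1$ to $\pm\lambda_2$ because the preferred longitude is the unique slope on the boundary torus that is null-homologous in the complement --- a property preserved by any homeomorphism of complements --- with sign $+$ because $\hat h$ is orientation-preserving and already matches meridians, hence preserves the algebraic intersection $\mu_i\cdot\lambda_i$.

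\textbf{The case $\gamma\ne\mu_2$ --- the main obstacle.} Here Dehn filling $E(K_2)$ along $\gamma$ is, via $h$, homeomorphic to $E(K_1)(\mu_1)=S^3$, so a \emph{non-trivial} surgery on $K_2$ yields $S^3$. If $K_2$ is the unknot then $E(K_2)\cong E(K_1)$ is a solid torus, $K_1$ is the unknot, and the two are isotopic; so we may assume both knots non-trivial and must then derive a contradiction. We are thus reduced to the statement that \emph{a non-trivial knot in $S^3$ admits no non-trivial surgery producing $S^3$}. This is the main theorem of Gordon--Luecke \cite{gordon1989knots}; it is the one genuinely hard ingredient, and I would invoke it as a black box, exactly as Theorem~\ref{hanselmans theorem} and the other deep inputs are invoked. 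Its proof runs by contradiction via the combinatorial method of intersection graphs: from a hypothetical non-trivial surgery --- and using that both the knot and its surgery dual are non-trivial --- one extracts two essential planar surfaces properly embedded in the common exterior, arising from level $2$-spheres in the two copies of $S^3$, puts them in minimal position, and records their intersection as a pair of labelled graphs on the two capped-off $2$-spheres; counting vertices, edges and faces via the Euler characteristic, together with parity conditions on the edge labels and Scharlemann-cycle / ``great web'' arguments, then forces a contradiction. Reproducing this lies well outside a sketch and is not needed here.

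\textbf{Conclusion.} Because the case $\gamma\ne\mu_2$ cannot occur when the knots are non-trivial, every orientation-preserving homeomorphism $E(K_1)\to E(K_2)$ is of the type handled in the first case: it realizes $\gamma=\mu_2$, the knots are isotopic, and $h$ sends meridians to meridians and preferred longitudes to preferred longitudes. (For the unknot the isotopy statement is trivial; the meridian/longitude addendum is only ever applied to non-trivial knots, e.g.\ to the exteriors furnished by Lemma~\ref{union_of_JSJ_is_knot_complement}.) The entire difficulty is concentrated in the second case.
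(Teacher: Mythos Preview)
The paper does not prove this theorem at all: it is stated with a citation to \cite{gordon1989knots} and then used as a black box, so there is no ``paper's own proof'' to compare against. Your write-up is the standard reduction of the Knot Complement Theorem to the Gordon--Luecke surgery theorem (non-trivial surgery on a non-trivial knot never yields $S^3$), and it is correct; in effect you have unpacked exactly why the cited reference implies the statement as the paper phrases it.

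Two remarks worth making. First, your caveat about the unknot is not pedantry but a genuine correction to the statement as printed: the complement of the unknot is a solid torus, and Dehn twists along a meridian disc give orientation-preserving self-homeomorphisms that move $\mu$ to $\mu+n\lambda$, so the addendum ``sends meridians to meridians'' fails there. The paper only ever applies the addendum to complements of non-trivial knots (via Lemma~\ref{union_of_JSJ_is_knot_complement}), so no damage is done, but your formulation is the accurate one. Second, the step ``every orientation-preserving self-homeomorphism of $S^3$ is isotopic to the identity'' deserves a citation of its own (Cerf in the smooth category, the Alexander trick topologically); it is independent of \cite{gordon1989knots} and is the other non-elementary input in your case $\gamma=\mu_2$.
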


\section{proof of the main theorem}
In this section, 
we prove that $S^3_r(J) \not\cong S^3_{s}(J)$ for $r\neq s$.
First, we exclude most cases by the following proposition.
\begin{prop}
\label{first_reduction}
If $S^3_r(J) \cong S^3_{s}(J)$, 
then the pair $\{r, s\}$ is either $\{\pm 1\}$, $\{\pm2\}$, or $\{\pm 1/2\}$.
\end{prop}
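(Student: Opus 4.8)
The plan is to leverage Theorem \ref{hanselmans theorem} as a black box, and then to rule out the remaining family $\{r,s\} = \{\pm 1/q\}$ with $q > 2$ by a JSJ-structure argument specific to composite knots. Since $J$ is composite, Theorem \ref{jsj_of_knot_complement} tells us that the JSJ-piece $M$ of $E(J)$ containing $\partial E(J)$ is a composing space $S(0,n+1;)$ with $n \geq 2$, and by Lemma \ref{composite_slope} its regular fibers are meridional on $\partial E(J)$. The key observation is then that filling $\partial E(J)$ along the slope $\alpha/\beta$ turns $M$ into the Seifert fibered space $S(0,n;\alpha/\beta)$, so that $S^3_{1/q}(J)$ contains $S(0,n;1/q)$ as a (candidate) JSJ-piece, while $S^3_{-1/q}(J)$ contains $S(0,n;-1/q)$.

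First I would verify that after filling, these Seifert pieces really are JSJ-pieces of the surgered manifolds: the remaining JSJ-tori and JSJ-pieces of $E(J)$ are untouched by the filling on $\partial E(J)$, and one checks via Proposition \ref{criterion} that no fibers match across the relevant tori — here one uses that a JSJ-piece adjacent to a composing space is never again a composing space, and that the adjacent pieces are complements of the prime summands, so the fiber-matching condition (2) fails as it did before filling. (If $n \geq 3$ one must also note that the filled piece is not $T^2 \times I$, which is immediate since $S(0,n;1/q)$ has more than two boundary components; the borderline low-$n$ cases need a separate sanity check.) Once the filled composing spaces are confirmed JSJ, Lemma \ref{diff_sfs_piece} applies: if $S^3_{1/q}(J) \cong S^3_{-1/q}(J)$ then we would need $S(0,n;1/q) \cong S(0,n;-1/q)$ as oriented manifolds, which by Proposition \ref{twist_cor} forces $1/q \equiv -1/q \pmod 1$, i.e. $2/q \in \mathbb{Z}$, i.e. $q \in \{1,2\}$. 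This contradicts $q > 2$ and eliminates those pairs, leaving exactly $\{\pm 1\}$, $\{\pm 2\}$, $\{\pm 1/2\}$.

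I expect the main obstacle to be the bookkeeping in the verification that the filled composing space is genuinely a JSJ-piece, rather than its Seifert structure merging with an adjacent piece or the whole surgered manifold degenerating (e.g. becoming reducible or small). One must be careful that the filling slope $1/q$ is not the meridional fiber-slope itself — but it is not, since $1/q$ with $q \geq 1$ is never the $\infty$-slope — so the composing space does not collapse to something with a disk or sphere boundary, and $S(0,n;1/q)$ is a well-defined Seifert fibered space with the same number of boundary tori as before. A second subtlety worth isolating is the genus-two exceptional case in Theorem \ref{hanselmans theorem}: the pair $\{\pm 2\}$ survives this reduction regardless of genus, so no further work is needed here, and the $\{\pm 1\}$ and $\{\pm 1/2\}$ cases are deferred to later sections as the outline in Section \ref{sec:introduction} indicates.
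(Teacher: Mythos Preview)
Your approach is essentially the same as the paper's: invoke Theorem~\ref{hanselmans theorem} to reduce to $\{\pm 1\}$, $\{\pm 2\}$, $\{\pm 1/q\}$, then for $q>2$ use Proposition~\ref{twist_cor} to see that $S(0,n;1/q)\not\cong S(0,n;-1/q)$ and conclude via Lemma~\ref{diff_sfs_piece}. The paper's proof is much terser and simply asserts that the filled composing spaces $S_1,S_2$ are JSJ-pieces (this is folded into the surrounding notation), whereas you spell out the verification via Proposition~\ref{criterion}; that extra care is reasonable and does not constitute a different method.
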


% \begin{proof}
% If $r$ is not an integer or $\pm 1/2$, then the two Dehn filled Seifert fibered spaces $S_1$ and $S_2$ are not homeomorphic as oriented manifolds, by Proposition \ref{twist_cor}. Thus the statement follows from Lemma \ref{diff_sfs_piece}. 
% \end{proof}

\begin{proof}
By Theorem \ref{hanselmans theorem},
we have $r \in \{\pm 1, \pm2,  \pm 1/q\}$ with $q>1$.
When $r = 1/q$ with $q > 2$,
the two Dehn filled Seifert fibered spaces $S_1$ and $S_{2}$ are not homeomorphic as oriented manifolds,
by Proposition \ref{twist_cor}.
Then the statement follows from Lemma \ref{diff_sfs_piece}. 
\end{proof}

% Assume that there exists an orientation-preserving homeomorphism $h : S^3_r(J) \to S^3_{-r}(J)$.
% We show that such an $h$ leads to a contradiction.

For convenience, we introduce the following notation.

% \begin{notation}
% Given $n$, $\alpha_1/\beta_1$, and $\alpha_2/\beta_2$, we use $S_0$, $S_1$ and $S_2$ to denote  $S(0,n + 1;)$, $S(0,n; \alpha_1/\beta_1)$, and $S(0,n; \alpha_2/\beta_2)$ respectively. In this section, we choose $r = \alpha_1/\beta_1$ and $-r = \alpha_2/\beta_2$. Note that $S_0$ is the JSJ-piece of $E(J)$ containing $\partial E(J)$.

% \end{notation}

\begin{notation}
Given $n$ and $r = \alpha/\beta$, 
we use $S_0$, $S_1$ and $S_{2}$ to denote 
the Seifert fibered JSJ-pieces
$S(0,n + 1;) \subset E(J)$, $S(0,n; \alpha/\beta)\subset S^3_r(J)$, 
and $S(0,n; -\alpha/\beta) \subset S^3_{-r}(J)$ respectively.
\end{notation}

% We assume that there exists an orientation-preserving homeomorphism $h$ from $S^3_r(J)$ to $S^3_{-r}(J)$ 
% and show that such a map leads to contradictions.  \\

% The JSJ-trees of  $S^3_r(J)$ and $S^3_{-r}(J)$ are identical except at the root, there is a natural correspondence between the JSJ-pieces of  $S^3_r(J)$ and those of $S^3_{-r}(J)$. The identical part is canonically homeomorphic to $E(J)\backslash S_0$. Sometimes we say $h$ maps some JSJ-pieces of $E(J)\backslash S_0$ to themselves in the sense that $h$ maps the JSJ-pieces in $S^3_r(J)\backslash S_1$ to the corresponding JSJ-pieces in $S^3_{-r}(J)\backslash S_2$. \\

% From now on, we assume $r \in \{\pm 1, \pm2,  \pm 1/2\}$.

For the rest of this article,
we assume the existence of a potential homeomorphism $h : S^3_r(J) \to S^3_{-r}(J)$.
This map should send $S_1$ to $S_2$ or elsewhere.
We show that both cases lead to contradictions,
and hence such an $h$ does not exist.
The following lemma tells us $h(S_1) \neq S_2$.
Note that we can naturally identify $S^3_r(J)\backslash S_1$ with $E(J)\backslash S_0$.

\begin{lemma}
\label{not_fixing_root} 
There is no orientation-preserving homeomorphism 
from $S^3_r(J)$ to $S^3_{-r}(J)$ 
which sends $S_1$ to $S_2$.
\end{lemma}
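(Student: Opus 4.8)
The plan is to derive a contradiction from the assumption that $h: S^3_r(J) \to S^3_{-r}(J)$ sends the root piece $S_1 = S(0,n;\alpha/\beta)$ to $S_2 = S(0,n;-\alpha/\beta)$. The first step is to observe that, since $h$ maps JSJ-pieces to JSJ-pieces and $S_1, S_2$ are the unique composing-space pieces containing the images of the filled solid torus, $h$ must restrict to an orientation-preserving homeomorphism $S_1 \to S_2$ and it must carry the complementary pieces $S^3_r(J)\setminus S_1 \cong E(J)\setminus S_0$ onto $S^3_{-r}(J)\setminus S_2 \cong E(J)\setminus S_0$ in a way that is compatible with the gluing along the $n$ boundary tori that are shared with the prime-summand complements $E(J_i)$.

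Next I would analyze the restriction $h|_{S_1}: S(0,n;\alpha/\beta) \to S(0,n;-\alpha/\beta)$. By Proposition \ref{twist_cor} such a homeomorphism exists only when $\alpha/\beta \equiv -\alpha/\beta \pmod 1$, which forces $2\alpha/\beta \in \mathbb{Z}$, consistent with the surviving cases $r \in \{\pm1, \pm2, \pm1/2\}$ from Proposition \ref{first_reduction}; more importantly, the same proposition says any such homeomorphism must act as a nontrivial sequence of Dehn twists along the regular fiber on at least one boundary torus. The key point is then to track what this does on the boundary tori that are glued to the prime summands. By Lemma \ref{composite_slope}, the regular fiber of the composing space is the meridional slope at each $\partial E(J_i)$, so a nontrivial fiber-direction Dehn twist on such a torus changes the meridian of $J_i$ (or rather shears the framing), whereas on the complementary side $h$ must induce a homeomorphism $E(J_i) \to E(J_{\sigma(i)})$ for some permutation $\sigma$, and by the Knot Complement Theorem (Theorem \ref{gordon_luecke}) that homeomorphism sends meridian to meridian and longitude to longitude on the nose. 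These two requirements are incompatible on whichever boundary torus carries the nontrivial twist.

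The main obstacle, and the step that needs care, is the bookkeeping of exactly which boundary tori of $S_1$ the forced Dehn twists act on, and ruling out the possibility that all of them land on the now-filled boundary component (the one that used to be $\partial E(J)$) rather than on one of the $\partial E(J_i)$'s. Here I would invoke the description of vertical Dehn twists in Theorem \ref{mapping_class_group_of_sfs}: the twist annuli connect distinct boundary components, and in $S(0,n;\alpha/\beta)$ the homeomorphism realizing $\alpha/\beta \mapsto -\alpha/\beta$ (per the proof of Proposition \ref{twist_cor}) uses a twist annulus joining the singular fiber to a genuine boundary torus — and after the Dehn filling the singular fiber is an interior object, so the twist necessarily registers nontrivially on at least one of the $n$ surviving boundary tori, each of which is a $\partial E(J_i)$. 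Combined with Proposition \ref{parametrizing_slopes_two_methods_relation} to make sure the slope identifications across the gluing are the standard ones, this pins the nontrivial twist onto some $\partial E(J_i)$ and contradicts Theorem \ref{gordon_luecke}, completing the proof. I would also double-check the orientation conventions, since the sign flip $r \mapsto -r$ is what makes the target piece $S(0,n;-\alpha/\beta)$ rather than $S(0,n;\alpha/\beta)$, and it is precisely this sign that prevents $h|_{S_1}$ from being fiber-preserving and identity-on-boundary.
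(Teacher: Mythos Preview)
Your proposal is correct and follows essentially the same route as the paper: use Proposition~\ref{twist_cor} to force a nontrivial fiber-direction Dehn twist on some boundary torus of $S_1$, and use Theorem~\ref{gordon_luecke} on the complementary pieces $E(J_i)$ (possibly permuted) to force the boundary restriction to be slope-preserving, yielding the contradiction. Your worry about the twist ``landing on the now-filled boundary component'' is unnecessary: after filling, $S_1=S(0,n;\alpha/\beta)$ has exactly $n$ boundary tori, all of the form $\partial E(J_i)$, and Proposition~\ref{twist_cor} already asserts the nontrivial twist occurs on a genuine boundary component of $S_1$; the paper's proof accordingly skips this discussion.
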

 
\begin{proof}

We first suppose that $h$ fixes each connected component of  $E(J)\backslash S_0$. 
By Theorem \ref{gordon_luecke}, 
the map $h$ preserves the slopes of each connected component of $\partial E(J)\backslash S_0$, 
i.e., 
it preserves the slopes on each component of $\partial S_1$. 
This contradicts the fact that $h$ has the effect of non-trivial Dehn twists on at least one boundary component of $S_1$, by Proposition \ref{twist_cor}. 

Now suppose that some of the prime summands of $J$ are isotopic, 
probably with different orientations. 
(cf. Remark \ref{non-invertible}.)
The map $h$ is slope-preserving on the boundary components of $S_1$ as above,
which again contradicts Proposition \ref{twist_cor}.   
\end{proof}

 % Since $h$ restricts to orientation-preserving homeomorphisms between knot complements, we know from  Theorem \ref{gordon_luecke} that it sends meridians to meridians and longitudes to longitudes, and hence is slope-preserving on $\partial S_1$. Now the map $h$ is similar to the first case except that $h$ induces a permutation of boundary components of $S_0$. This is again not possible by  Proposition \ref{twist_cor}. 

 % This means the composite knot $J$ has some identical prime factors up to orientation. If the orientations of isotopic knots are the same, then there exists an orientation-preserving map from $S^3_r(J)$ to itself which permutes these factors. Thus, by possibly composing with the symmetries of $S^3_r(J)$, we can assume each homeomorphism from $S^3_r(J)$ to $S^3_{-r}(J)$ maps each component of $E(J)\backslash S_0$ to itself, and the statement follows from Lemma \ref{simplest_non_extension} in this case. If 

% Let $J$ be a composite knot. If there exist identical prime factors of $J$, then the JSJ-tree of the surgered manifold $S^3_r(J)$ has certain symmetries around its root. On the other hand, there may exist symmetries which map the root to other JSJ-pieces. 

From now on, 
we assume $h(S_1) \neq S_2$,
and we only consider pairs $\{\pm 1\}$, $\{\pm 2\}$ and $\{\pm 1/2\}$.

\begin{cor}
\label{prime_factors_greater_than_two}
Let $J$ be a composite knot.
Suppose it has at least three prime summands and $r = \pm 1/2$, or it has two prime summands and $r \in \mathbb{Z}$.
Then $S^3_r(J) \not\cong S^3_{-r}(J).$
\end{cor}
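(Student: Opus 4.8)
The plan is to combine Lemma \ref{not_fixing_root} with a counting/structural argument on the JSJ-trees. By Lemma \ref{not_fixing_root} we already know that any orientation-preserving homeomorphism $h : S^3_r(J) \to S^3_{-r}(J)$ cannot send $S_1$ to $S_2$. Since $h$ must take JSJ-pieces to JSJ-pieces (the JSJ-decomposition is a homeomorphism invariant), and both $S_1 = S(0,n;\alpha/\beta)$ and $S_2 = S(0,n;-\alpha/\beta)$ are Seifert fibered pieces of a very particular type, the strategy is to show that in the two cases under consideration $S_1$ (resp. $S_2$) is the \emph{unique} JSJ-piece of $S^3_r(J)$ (resp. $S^3_{-r}(J)$) of its homeomorphism type, so that $h$ is forced to send $S_1$ to $S_2$, contradicting Lemma \ref{not_fixing_root}.

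First I would treat the case where $J$ has at least three prime summands and $r = \pm 1/2$. Here $S_0 = S(0,n+1;)$ with $n \ge 3$, and the $\pm 1/2$-filling on $\partial E(J)$ turns $S_0$ into $S_1 = S(0,n; \pm 1/2)$, a Seifert fibered space over a disk with $n-1 \ge 2$ further boundary components and a single singular fiber of multiplicity $2$. I would argue that no other JSJ-piece of $S^3_r(J)$ can be homeomorphic to this: the other JSJ-pieces come from the maximal subtrees rooted at the children of $S_0$, i.e.\ from the knot complements $E(J_i)$, and by Theorem \ref{jsj_of_knot_complement} a Seifert fibered JSJ-piece arising inside a knot complement is a cable space (annulus-based, one singular fiber, only two boundary components), a composing space (but a piece adjacent to a composing space is never a composing space, and $J_i$ is prime so $j_i$ is not a composing space), a torus knot complement (no boundary torus equal to $\partial E(J_i)$, so it sits deeper in the tree and has no relation to $S_1$), or a hyperbolic piece. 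The only candidate with $\ge 3$ boundary tori that could match $S_1$ would be another composing space $S(0,m+1;)$ coming from some $E(J_i)$ — but $J_i$ is prime, so this does not occur; and a composing-space-with-a-filling does not occur elsewhere because $S_0$ is the only piece meeting $\partial E(J)$. Hence $S_1$ is the unique JSJ-piece of its type, and similarly for $S_2$, so $h(S_1) = S_2$, contradicting Lemma \ref{not_fixing_root}.

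Next, the case where $J$ has exactly two prime summands and $r \in \mathbb{Z}$: now $S_0 = S(0,3;)$, and the integral filling on $\partial E(J)$ produces $S_1 = S(0,2; r) \cong S(0,2; r \bmod 1)$, a Seifert fibered space over an annulus with one singular fiber — that is, a \emph{cable space}. The potential trouble is that cable spaces also arise as JSJ-pieces of satellite prime summands, so $S_1$ need not be the unique cable-space piece. To handle this I would look at the slopes: in $S^3_r(J)$ the piece $S_1$ is glued to the two subtrees $\bar{j_1}, \bar{j_2}$ along the meridians of $\partial E(J_1)$ and $\partial E(J_2)$ (by Lemma \ref{composite_slope} and Proposition \ref{parametrizing_slopes_two_methods_relation}), whereas a cable space inside $E(J_i)$ is glued to adjacent pieces along cabling slopes $p'q'/1$ and $p'/q'$ (Lemma \ref{cable_space_fiber_slope}); these gluing data, together with the fact that exactly one boundary component of $S_1$ has been filled while a cabling cable-space keeps all its tori, distinguish $S_1$ from any other cable space in the decomposition. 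More simply: $S_1$ is characterised as the unique cable-space JSJ-piece both of whose original boundary tori bound, on the outside, non-trivial knot complements in $S^3$ (namely $E(J_1)$ and $E(J_2)$) — the signature of a composing space after one filling, not of a cabling cable space. Once $S_1$ and $S_2$ are each pinned down as the unique piece of this kind, $h(S_1) = S_2$ is forced, and Lemma \ref{not_fixing_root} gives the contradiction.

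The main obstacle I anticipate is precisely this last uniqueness claim in the two-summand integral case: ruling out an accidental homeomorphism between the filled composing space $S_1$ and some cable space buried in a satellite summand $E(J_i)$. Making that airtight requires being careful about which data of a Seifert piece are homeomorphism invariants versus which depend on the gluing, and possibly invoking Lemma \ref{union_of_JSJ_is_knot_complement} to reinterpret the relevant unions of pieces as knot complements and then using Theorem \ref{gordon_luecke} to control the slopes. The three-or-more-summand case should be comparatively routine, since there the multiplicity-$\ge 3$ of boundary components already separates $S_1$ from every other Seifert piece in the tree.
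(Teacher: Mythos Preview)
Your overall strategy---show that $h$ is forced to send $S_1$ to $S_2$ and then invoke Lemma \ref{not_fixing_root}---is exactly the paper's approach, and your treatment of the case $n\ge 3$, $r=\pm 1/2$ is essentially the paper's argument.

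There is, however, a genuine error in the two-summand integral case. You write that $S_1 = S(0,2;r)$ is ``a Seifert fibered space over an annulus with one singular fiber---that is, a cable space.'' This is wrong: for $r\in\mathbb{Z}$ we have $\beta=1$, so the filled fiber is \emph{regular} (equivalently, $r\equiv 0 \pmod 1$ and $S(0,2;r)\cong S(0,2;)$ by Proposition \ref{twist_cor}). Thus $S_1$ has \emph{no} singular fiber; it is not a cable space at all. The paper's proof exploits exactly this: a Seifert piece with two boundary components and no singular fiber does not appear on the list in Theorem \ref{jsj_of_knot_complement}, so no JSJ-piece of $S^3_{-r}(J)\setminus S_2$ can be homeomorphic to $S_1$, and $h(S_1)=S_2$ is immediate.

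Because of this misidentification you embark on a slope-matching argument to distinguish $S_1$ from cable spaces buried in the $E(J_i)$, and you correctly flag that step as the main obstacle. In fact that obstacle is illusory here; the slope analysis you sketch is instead what the paper needs (and carries out) for the genuinely cable-space case $n=2$, $r=\pm 1/2$ in Proposition \ref{two_factors_r_1/2}. So the fix is simple: correct the Seifert invariant computation, and the second case becomes as routine as the first.
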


\begin{proof}
If $n \ge 3$ and $r = \pm 1/2$, then $S_1$ is a Seifert fibered space with $\ge 3$ boundary components and with a singular fiber; if $n = 2$ and $r \in \mathbb{Z}$, then $S_1$ is a Seifert fibered space with two boundary components and without singular fibers. 
In both cases, there is no JSJ-piece of $S^3_{-r}(J) \backslash S_2$ homeomorphic to $S_1$, by Theorem \ref{jsj_of_knot_complement}. 
Thus any homeomorphism from $S^3_r(J)$ to $S^3_{-r}(J)$ must send $S_1$ to $S_2$, 
then the statement follows from Lemma \ref{not_fixing_root}.
\end{proof}

% \begin{notation}
% Now we suppose $H(S_1) \neq S_2$. Let $\bar{Y}, \bar{Z}_1, ..., \bar{Z}_{n-1}$  be the connected components of $S^3_r(J) \backslash S_1$. We also let $\bar{Y}', \bar{Z}_1', ..., \bar{Z}_{n-1}'$  be the corresponding connected components of $S^3_{-r}(J) \backslash S_2$.  These connected components are the knot complements of the prime factors $J_Y, J_1 , ... , J_{n-1}$. We can assume that $\bar{Y}'$ contains $H(S_1)$. 
% \end{notation}

\begin{notation}
Let $J_i$ be a prime summand of $J$.
We use $\bar{Z}_i$ to denote the part of $S^3_r(J)$
corresponding to $E(J_i)$ in the JSJ-tree.
For that of $S^3_{-r}(J)$,
we use the notation $\bar{Z}_i'$.
Using these notations, 
each JSJ-tree is simply the union of $S_1$ (or $S_2$) and some $\bar{Z}_i$'s (or $\bar{Z}_i'$'s).
Since we assume $h(S_1) \neq S_2$,
it follows that $h(S_1)$ must be contained in some $\bar{Z}_i'$.
We use  $\bar{Y}'$ instead of $\bar{Z}_i'$,
indicating that it contains $h(S_1)$,
and use $J_Y$ to denote the corresponding prime summand. 
We also use a set of notations without `bar'.
For example, $Y'$ means the JSJ-piece in $\bar{Y}'$ containing $\partial \bar{Y}'$.
See Figure \ref{larger_than_3}.
\end{notation}

\begin{figure}[ht]
\includegraphics{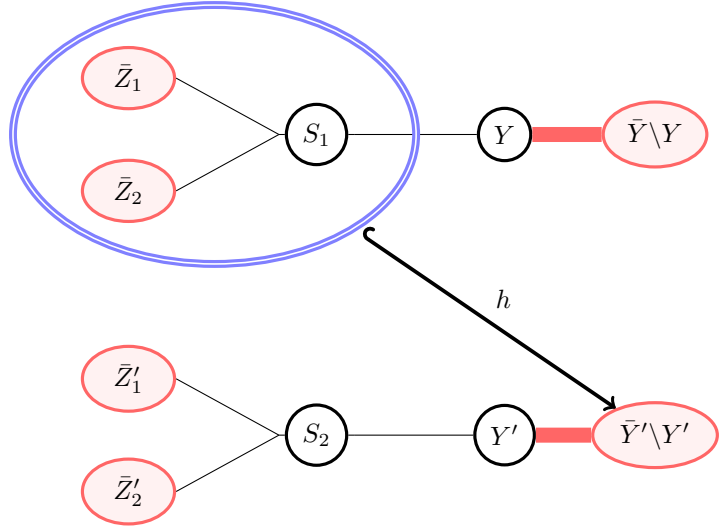}
\caption{This picture illustrates the JSJ-trees of $S^3_r{(J)}$ (above) and $S^3_{-r}(J)$ (below) when $n = 3$. For $n > 3$, we just have more $\bar{Z_i}$ nodes. Each round node represents a JSJ-piece, and each elliptical node represents a union of JSJ-pieces. The elliptical nodes $\bar{Z}_i$ and $\bar{Z}_i'$ denote the knot complements $E(J_i)$. The thick lines between $Y$ and $\bar{Y}\backslash Y$ denotes a multiple-edge, meaning that $Y$ may have more than one subtrees. The nodes contained in the large ellipse forms $\bar{Z}\cup S_1$, which is mapped into $\bar{Y}'$ by Lemma \ref{map_to_Ybar}. In the case $n \ge 3$, we have $h(S_1) \neq Y'$ and hence $\bar{Z}\cup S_1$ is mapped into $\bar{Y}'\backslash Y'$ (Lemma \ref{map_to_Ybar}).
\label{larger_than_3}}
\end{figure}

\begin{lemma}
\label{map_to_Ybar}
We have $h(\bar{Z}_i \cup S_1) \subset \bar{Y}'$ for each $i$. 
Furthurmore, 
we have $h(\bar{Z}_i \cup S_1) \subset \bar{Y}'\backslash Y'$ when $n \ge 3$. 
\end{lemma}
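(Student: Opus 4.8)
The plan is to argue that $h$ must carry the whole subtree $\bar{Z}_i \cup S_1$ into the subtree $\bar{Y}'$, and then to sharpen this by excluding the root $Y'$ of $\bar{Y}'$ as a possible image of $S_1$ when $n \ge 3$. The starting point is the observation that $h$ maps JSJ-pieces to JSJ-pieces and JSJ-tori to JSJ-tori, so it induces a graph isomorphism between the JSJ-trees of $S^3_r(J)$ and $S^3_{-r}(J)$. Since the JSJ-tree of $S^3_r(J)$ is the union of the central node $S_1$ and the subtrees $\bar{Z}_1, \dots, \bar{Z}_n$ hanging off it, and similarly downstairs with $S_2$ and the $\bar{Z}_j'$'s, the image of $S_1$ lies in exactly one subtree $\bar{Y}'$ (this is the definition of $\bar{Y}'$, using $h(S_1) \neq S_2$). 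Removing the vertex $S_1$ from the upstairs tree disconnects it into the pieces $\bar{Z}_1, \dots, \bar{Z}_n$; removing its image $h(S_1)$ from the downstairs tree likewise disconnects it, and one of the resulting components is the ``outer'' piece containing $S_2$. Because $S_1$ has valence $n$ in the upstairs tree, $h(S_1)$ has valence $n$ in the downstairs tree, and the $n$ components of (downstairs tree)$\setminus h(S_1)$ are the $h$-images of $\bar{Z}_1, \dots, \bar{Z}_n$; exactly one of them contains $S_2$ and hence meets $\partial S^3_{-r}(J)$'s filling locus. For the remaining $n-1$ components, each is entirely contained in $\bar{Y}'$ since it lies on the same side of $h(S_1)$ as the rest of $\bar{Y}'$ relative to the tree structure of $S^3_{-r}(J)$. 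So at least $n-1$ of the $h(\bar{Z}_i)$ lie inside $\bar{Y}'$; I would then check that in fact all of them do, by noting that the component of (downstairs tree)$\setminus h(S_1)$ containing $S_2$ cannot contain any $h(\bar{Z}_i)$ either — such a $\bar{Z}_i$ would have to be glued to $S_1$ on its outer side, contradicting that $S_1$'s outer neighbor is $S_2$ and that $h(S_1) \neq S_2$ forces $h(S_1)$ to sit strictly inside $\bar{Y}'$. This yields $h(\bar{Z}_i \cup S_1) \subset \bar{Y}'$ for every $i$.

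For the second, sharper statement, suppose $n \ge 3$ and, for contradiction, that $h(S_1) = Y'$, the root of $\bar{Y}'$. Then $h$ identifies the central composing space $S_1 = S(0,n;\alpha/\beta)$ with $Y'$, which is the JSJ-piece of $\bar{Y}'$ adjacent to $\partial \bar{Y}'$ — that is, the JSJ-piece of the prime-summand complement $E(J_Y)$ touching its outer boundary torus. Here I would use Theorem \ref{jsj_of_knot_complement}: the JSJ-piece of a knot complement containing $\partial E(J_Y)$ is a cable space, a composing space $S(0,m+1;)$, a hyperbolic piece, or (impossibly, since $\partial N$ would not be contained) a torus knot complement; but $Y'$ sits inside the subtree $\bar{Z}_i'$ of the \emph{composite} knot $J$, so $Y'$ is a JSJ-piece of $E(J_Y)$ with $J_Y$ \emph{prime}, hence $Y'$ cannot be a composing space (a JSJ-piece adjacent to a composing space cannot be a composing space, and more directly $J_Y$ being prime rules it out). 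Therefore $Y'$ is either a cable space (a Seifert fibered space with one singular fiber) or a hyperbolic manifold. If $r \in \{\pm 1, \pm 2\}$ then $S_1 = S(0,n;\alpha/\beta)$ with $n \ge 3$ has $n \ge 3$ boundary tori and one singular fiber, which is not a cable space (too many boundary components) and is certainly not hyperbolic; if $r = \pm 1/2$ then $S_1 = S(0,n;)$ with $n \ge 3$, again Seifert fibered with $\ge 3$ boundary components and now no singular fiber, hence neither a cable space nor hyperbolic. Either way $S_1 \not\cong Y'$, contradicting $h(S_1) = Y'$. Hence $h(S_1) \neq Y'$, so $h(S_1)$ lies strictly below $Y'$ in $\bar{Y}'$, i.e.\ $h(S_1) \subset \bar{Y}' \setminus Y'$, and then the first part upgrades to $h(\bar{Z}_i \cup S_1) \subset \bar{Y}' \setminus Y'$.

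The main obstacle I anticipate is bookkeeping with the tree structure and the valences: one must be careful that ``removing a vertex'' statements about the JSJ-tree genuinely transport through $h$, which requires knowing $h$ preserves the JSJ-decomposition (it does, by uniqueness of JSJ-tori up to isotopy) and that the distinguished outer boundary of $S^3_{\pm r}(J)$ behaves correctly — but since the Dehn filling glues a solid torus to the composing-space boundary, the piece $S_1$ (respectively $S_2$) has valence exactly $n$ in its tree, which is the one numerical input driving the whole argument. A secondary point needing care is the $r = \pm 1/2$ case where $S_1$ has no singular fiber: here the exclusion of $Y'$ as a cable space rests on the boundary count ($n \ge 3$ versus the two boundary tori of an annulus-based Seifert fibered space), so the hypothesis $n \ge 3$ is used essentially, matching the statement.
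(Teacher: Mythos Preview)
Your second paragraph is essentially correct and matches the paper: both arguments reduce to showing that $Y'$ is not a composing space, and the reason you give ($J_Y$ is prime) is equivalent to the paper's ($Y'$ is adjacent to the composing space $S_2$, so Theorem~\ref{jsj_of_knot_complement} forbids it). You have swapped the two sub-cases --- for integral $r$ the filled core is a \emph{regular} fiber, so $S_1 = S(0,n;)$ with no singular fiber, while for $r=\pm 1/2$ the multiplicity is $2$ --- but since your contradiction uses only the boundary count $n\ge 3$, this slip is harmless.

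The first paragraph, however, goes wrong at ``I would then check that in fact all of them do.'' You had already correctly shown that removing $h(S_1)$ downstairs yields $n$ components, exactly one containing $S_2$ and the remaining $n-1$ lying inside $\bar{Y}'$. That is all the lemma asserts: in the paper's convention one of the upstairs subtrees is relabeled $\bar{Y}$ --- it is precisely the subtree whose image is the large component containing $S_2$ --- and ``for each $i$'' refers to the other $n-1$. Your attempt to push further and argue that the component containing $S_2$ is also inside $\bar{Y}'$ is simply false (that component contains $S_2$ together with every $\bar{Z}_j'$, $j\neq Y$), and the justification ``$S_1$'s outer neighbor is $S_2$'' does not make sense, since $S_1$ and $S_2$ live in different manifolds. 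What you are missing, and what the paper supplies, is the identification of \emph{which} upstairs subtree is $\bar{Y}$: the component containing $S_2$ already swallows $S_2$ and all $\bar{Z}_j'$ with $j\neq Y$, so it has strictly more JSJ-pieces than any single $\bar{Z}_i$ with $i\neq Y$; hence it can only be the image of $\bar{Y}=E(J_Y)$.
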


\begin{proof}
Since $h(S_1) \subset \bar{Y}'$, 
there is a connected component of $S^3_{-r}(J)\backslash h(S_1)$ 
which contains $S_2$ and hence contains each $\bar{Z_i}'$. 
This component is actually $h(\bar{Y})$ since it contains more JSJ-pieces than each $\bar{Z_i}$ does. 
Indeed, the other connected components (which are not shown in Figure \ref{larger_than_3}) are contained in $\bar{Y}'$. 
% The connected component containing $S_2$ is $h(\bar{Y})$ since it contains more JSJ-pieces than each $\bar{Z_i}$ does. 
Thus each $h(\bar{Z_i})$ corresponds to a connected component of $S^3_{-r}(J)\backslash h(S_1)$ contained in $\bar{Y}'$.  

When $n \ge 3$, the spaces $S_2$ and $ h(S_1)$ are composing spaces with $\ge 3$ boundary components (cf. Corollary \ref{prime_factors_greater_than_two}).
Since $Y'$ is adjacent to $S_2$, it can not be a composing space,
by Theorem \ref{jsj_of_knot_complement}. 
Hence we have $h(S_1) \neq Y'$.
\end{proof}

\begin{lemma}
\label{some_knot_complements}
The spaces $h(\bar{Z} \cup S_1)$ and $h(Y\cup S_1 \cup \bar{Z})$
can be regarded as knot complements.
\end{lemma}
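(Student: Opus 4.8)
The plan is to invoke Lemma \ref{union_of_JSJ_is_knot_complement}, which says that a union of JSJ-pieces of $E(K)$ with a single torus boundary component is the complement of a non-trivial knot in $S^3$. So the task reduces to two things: first, checking that $h(\bar{Z}\cup S_1)$ and $h(Y\cup S_1 \cup \bar{Z})$ are in fact unions of JSJ-pieces of a knot complement, and second, checking that each has exactly one torus boundary component. Since $h$ is a homeomorphism, it suffices to verify these properties for the preimages $\bar{Z}\cup S_1$ and $Y\cup S_1\cup \bar{Z}$ sitting inside $S^3_r(J)$, and then transport them across $h$ — except that $h$ maps into $S^3_{-r}(J)$, which is surgery, not a knot complement, so I need to be a little careful and instead exhibit the images as unions of JSJ-pieces of a genuine knot complement.

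First I would record that $S_1 = S(0,n;\alpha/\beta)\subset S^3_r(J)$, the $\bar{Z}_i$'s are the satellite pieces $E(J_i)$, and the JSJ-tree of $S^3_r(J)$ is the union of $S_1$ with the $\bar{Z}_i$'s, as in the notation set up before Figure \ref{larger_than_3}. Then $\bar{Z}\cup S_1$ (here $\bar{Z}$ is a single one of the $\bar{Z}_i$'s, so in the tree it is the subtree consisting of $S_1$ together with $\bar{Z}$ attached along one JSJ-torus) is a union of JSJ-pieces whose boundary consists of the $n-1$ tori $\partial \bar{Z}_j$ for $j\neq$ (the index of $\bar Z$) — each of these being the JSJ-torus separating $S_1$ from $\bar{Z}_j$. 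I claim instead that what is meant — and what makes the statement true — is that $\bar Z \cup S_1$ together with solid-torus fillings of all those extra boundary tori is a knot complement; more precisely, by Lemma \ref{map_to_Ybar} we have $h(\bar Z\cup S_1)\subset \bar Y'$, and inside $S^3_{-r}(J)$ the complement of $h(\bar Z\cup S_1)$ is connected (it contains $S_2$ and all $\bar Z_j'$), so $h(\bar Z\cup S_1)$ has exactly one boundary torus, namely $\partial\bar Y'$ if $h(S_1)=Y'$, or a JSJ-torus inside $\bar Y'$ otherwise. That single boundary torus is a JSJ-torus of $E(J)$ (it is a JSJ-torus of $S^3_{-r}(J)$ lying in the knot-complement part $\bar Y' \subset E(J_Y)$), so $h(\bar Z\cup S_1)$ is a union of JSJ-pieces of $E(J)$ with torus boundary, and Lemma \ref{union_of_JSJ_is_knot_complement} applies. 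The same reasoning handles $h(Y\cup S_1\cup \bar Z)$: it too sits inside $\bar Y'$ (since $Y' = Y$-piece sits between $S_2$ and the rest, and $h(S_1)$ does not meet $S_2$), its complement in $S^3_{-r}(J)$ is connected, so it has one boundary torus which is JSJ.

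The main obstacle will be the bookkeeping of \emph{which} torus is the boundary and verifying it is genuinely a JSJ-torus of the knot complement $E(J)$ (as opposed to a torus created by the surgery) — this is where one must use that the surgery slope is $\pm 1$, $\pm 2$, or $\pm 1/2$ so that the surgered Seifert piece $S_2$ really is a composing space of the same combinatorial type, and that by Lemma \ref{map_to_Ybar} the relevant tori all lie in the honest knot-complement region $\bar Y' \subset E(J_Y)$ where Lemma \ref{union_of_JSJ_is_knot_complement} is valid. Once the boundary torus is identified as a JSJ-torus of $E(J)$, the conclusion is immediate from Lemma \ref{union_of_JSJ_is_knot_complement}. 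I would write this up as: identify $h(\bar Z\cup S_1)$ and $h(Y\cup S_1\cup\bar Z)$ as unions of JSJ-pieces of $E(J)$ cut off by a single JSJ-torus (using Lemma \ref{map_to_Ybar} and connectedness of the complement), then apply Lemma \ref{union_of_JSJ_is_knot_complement}.
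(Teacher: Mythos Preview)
Your argument has the right overall shape, but it rests on a misreading of the notation that creates a genuine gap. You take $\bar{Z}$ to be a single $\bar{Z}_i$; in the paper $\bar{Z}$ denotes the disjoint union of \emph{all} the $\bar{Z}_i$'s other than $\bar{Y}$ (see the sentence just before Lemma~\ref{meridians_are_glued_together} and the large ellipse in Figure~\ref{larger_than_3}). With the correct reading one has $S^3_r(J)=\bar{Y}\cup_T(\bar{Z}\cup S_1)$ along a \emph{single} JSJ-torus $T$, so $\bar{Z}\cup S_1$ already has exactly one boundary component and there are no ``extra boundary tori'' to fill. Worse, your connectedness claim is false under your own reading: if $\bar{Z}=\bar{Z}_i$ for one index $i$ and $n\ge 3$, then the complement of $\bar{Z}_i\cup S_1$ in $S^3_r(J)$ is the disjoint union $\bigcup_{j\neq i}\bar{Z}_j$, and applying the homeomorphism $h$ shows that the complement of $h(\bar{Z}\cup S_1)$ in $S^3_{-r}(J)$ is likewise disconnected --- contradicting the very step you rely on to conclude that there is a single boundary torus.

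Once $\bar{Z}$ is read correctly, your approach collapses to exactly the paper's two-line proof: Lemma~\ref{map_to_Ybar} places both $h(\bar{Z}\cup S_1)$ and $h(Y\cup S_1\cup\bar{Z})$ inside $\bar{Y}'=E(J_Y)\subset S^3$, so each is a union of JSJ-pieces of a knot exterior with torus boundary, and Lemma~\ref{union_of_JSJ_is_knot_complement} finishes. The ``main obstacle'' you anticipate --- verifying that the boundary torus is a genuine JSJ-torus of $E(J)$ rather than an artifact of the surgery --- never materializes, precisely because Lemma~\ref{map_to_Ybar} already confines the image to the unsurgered region $\bar{Y}'$, away from $S_2$.
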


\begin{proof}
By Lemma \ref{map_to_Ybar}, both spaces are contained in $\bar{Y}'$,
and hence can be regarded as submanifolds in $S^3$.
Then the statement follows from Lemma \ref{union_of_JSJ_is_knot_complement}.
\end{proof}

\begin{prop}
\label{two_factors_r_1/2}
Suppose $J$ has two prime summands.
Then  $S^3_{1/2}(J) \not\cong S^3_{-1/2}(J)$.
\end{prop}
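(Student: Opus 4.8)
\textbf{Proposal for the proof of Proposition \ref{two_factors_r_1/2}.}
The plan is to analyze the case where $J = J_1 \# J_2$ has exactly two prime summands and $r = 1/2$, building on the structural results already established. By Lemma \ref{not_fixing_root} we know $h(S_1) \neq S_2$, so $h(S_1)$ lies inside one of $\bar{Z}_1'$ or $\bar{Z}_2'$; relabel so that $h(S_1) \subset \bar{Y}'$ with $J_Y$ the corresponding prime summand (necessarily $J_Y = J_1$ or $J_2$). Since $n = 2$, the composing space $S_1 = S(0,2;1/2)$ has only two boundary components and one singular fiber, so unlike the $n \geq 3$ case we cannot immediately rule out $h(S_1) = Y'$. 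The first step is therefore to examine what $Y'$ can be: it is the JSJ-piece of $\bar{Y}'$ containing $\partial \bar{Y}' = \partial E(J_Y)$, and by Lemma \ref{map_to_Ybar} the whole of $\bar{Z} \cup S_1$ maps into $\bar{Y}'$, where $\bar{Z}$ denotes the ``other'' summand complement (the one not equal to $\bar{Y}$), i.e. $\bar{Z} = \bar{Z}_2$ if $J_Y = J_1$ and vice versa.

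The core of the argument should be a counting/structural comparison of the two JSJ-trees. In $S^3_{1/2}(J)$ the tree consists of the central node $S_1$ with the two subtrees $\bar{Z}_1$ and $\bar{Z}_2$ hanging off it; in $S^3_{-1/2}(J)$ the tree is $S_2$ with $\bar{Z}_1'$ and $\bar{Z}_2'$. Since $h$ is a homeomorphism it induces an isomorphism of JSJ-trees (with matching JSJ-pieces, by Lemma \ref{diff_sfs_piece}). The image $h(S_1)$ is a composing space $S(0,3;)$ sitting inside $\bar{Y}'$, so $\bar{Y}'$ itself must contain a composing-space JSJ-piece; by Theorem \ref{jsj_of_knot_complement} this forces $J_Y$ to be a composite knot — but $J_Y$ is a prime summand, contradiction, \emph{unless} $h(S_1)$ is actually the root node of the whole tree in disguise. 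Wait: $h(S_1) \subset \bar{Y}' \subsetneq S^3_{-1/2}(J)$ and $S_2 \not\subset \bar Y'$, so $h(S_1)$ genuinely is a proper interior JSJ-piece of $\bar Y' \subset E(J_Y)$, forcing $E(J_Y)$ to have a composing-space JSJ-piece and hence $J_Y$ composite — the desired contradiction. I would present this carefully: apply Lemma \ref{union_of_JSJ_is_knot_complement} and Lemma \ref{some_knot_complements} to see $\bar{Y}'$ (or an appropriate sub-union) as a knot complement in $S^3$, then invoke Theorem \ref{jsj_of_knot_complement} to conclude that a composing-space JSJ-piece inside it makes the underlying knot composite, which is impossible for a prime summand $J_Y$.

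The step I expect to be the main obstacle is handling the possibility $h(S_1) = Y'$ cleanly, since the simple ``composing space forces composite'' observation above needs $h(S_1)$ to be a \emph{non-root} piece of a prime knot complement; if instead $h(S_1) = Y'$ one must argue via the fiber-slope matching (Lemma \ref{composite_slope}, Proposition \ref{parametrizing_slopes_two_methods_relation}) and the Dehn-twist effect of Proposition \ref{twist_cor} that the gluing data still cannot be realized by an orientation-preserving homeomorphism — essentially rerunning the slope-preservation contradiction of Lemma \ref{not_fixing_root} one level down. So the write-up would split into two cases according to whether $h(S_1) = Y'$ or $h(S_1) \subsetneq \bar{Y}' \setminus Y'$: in the latter, the composite-summand contradiction is immediate; in the former, one traces the boundary slopes of $S_1$ under $h$ — using Theorem \ref{gordon_luecke} to pin down slopes on the summand tori and Lemma \ref{composite_slope} to identify the fiber slope — and derives a contradiction with Proposition \ref{twist_cor}, exactly as in Lemma \ref{not_fixing_root}. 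The delicate bookkeeping is making sure the two boundary components of $S_1$ (one capped off by the $1/2$-filling, one glued to $\bar Z$) are correctly matched with the boundary components of $h(S_1)$ inside $\bar Y'$.
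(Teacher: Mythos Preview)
Your proposal has a genuine error at its core: you misidentify the space $S_1$. With $n=2$ and $r=1/2$, the filled piece is $S_1 = S(0,2;1/2)$, which has \emph{two} boundary tori and one singular fiber of multiplicity $2$ --- that is, it is a \emph{cable space}, not a composing space. It is not homeomorphic to $S(0,3;)$ (which has three boundary tori), so the sentence ``The image $h(S_1)$ is a composing space $S(0,3;)$ sitting inside $\bar{Y}'$'' is simply false. Consequently your main contradiction --- ``$\bar{Y}'$ contains a composing-space JSJ-piece, hence $J_Y$ is composite'' --- collapses: a prime knot complement can perfectly well contain cable-space JSJ-pieces (any iterated cable of a hyperbolic knot does), so finding $h(S_1)$ inside $\bar Y' = E(J_Y)$ yields no contradiction with primeness of $J_Y$.

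The paper's proof exploits exactly this cable-space identification. Since $h(S_1)$ is a cable space sitting inside $\bar Y'$, the union $h(\bar Z_1 \cup S_1)$ is the complement of a $(p'',q')$-cable of some non-trivial knot (Lemma~\ref{union_of_JSJ_is_knot_complement}, Theorem~\ref{jsj_of_knot_complement}). By Lemma~\ref{cable_space_fiber_slope} the regular fiber of $h(S_1)$ has slope $p''/q'$ on $\partial h(\bar Z_1)$ with $q' \ge 2$, hence is \emph{not} meridional there. But the regular fiber of $S_1$ on $\partial \bar Z_1$ \emph{is} meridional (Lemma~\ref{composite_slope}), and $h$ preserves both slopes (Theorem~\ref{gordon_luecke}) and fibers --- contradiction. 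No case split on whether $h(S_1)=Y'$ is needed; the fiber-slope mismatch works uniformly. Your fallback idea of tracking slopes via Proposition~\ref{twist_cor} and Lemma~\ref{composite_slope} is in the right spirit, but the clean route is through Lemma~\ref{cable_space_fiber_slope}, which you never invoke.
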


\begin{proof}
% Suppose there is such an $h$ satisfying $h(S_1) \neq S_2$. 
% Since $S_1$ is a Seifert fibered space with two boundary components and with one singular fiber, 
% hence $h(S_1)$ is a cable space JSJ-piece in $S^3_{-r}(J), 
% $by Theorem \ref{jsj_of_knot_complement}. 
Since $J$ has two prime summands,
$S_1$ is a Seifert fibered space with two boundary components
and with one singular fiber,
i.e.,
a cable space.
This also means that $h(S_1)$ is a cable space.
The image $h(\bar{Z}_1 \cup S_1)$ and $h(\bar{Z}_1)$ can be regarded as  knot complements in $S^3$, 
by Lemma \ref{union_of_JSJ_is_knot_complement} and Lemma \ref{map_to_Ybar}. 
Furthermore, the space $h(\bar{Z}_1 \cup S_1)$ must be the complement of a cable of a non-trivial knot since $h(S_1)$ is a cable space. 
By Lemma \ref{cable_space_fiber_slope}, the fiber-slope of $h(S_1)$ at $\partial h(\bar{Z}_1)$ is not meridional. 
On the other hand, the fiber-slope of $S_1$ at $\partial \bar{Z}_1$ is meridional. 
Since $h$ preserves slopes and sends fibers to fibers, we have a contradiction. 
\end{proof}

It remains to deal with the case when $n \ge 3$ and $r = \pm 1$ or $\pm2$.
However, the slopes $r = \pm 2$ can be excluded by Theorem \ref{hanselmans theorem},
since the genus of $J$ is greater than two when $n \ge 3$.
Thus the difficulty lies in dealing with the slopes $\pm 1$.

We denote  the disjoint union of all the $\bar{Z}_i$'s (not including $\bar{Y}'$) by $\bar{Z}$.

\begin{lemma}
\label{meridians_are_glued_together}
The space $\bar{Z} \cup S_1$  can be regarded as a  knot complement. 
The fiber-slope of $S_1$ at the boundary of $\bar{Z} \cup S_1$ is meridional. 
In particular, the meridians of $\bar{Y}$ are glued to the meridians of $\bar{Z} \cup S_1$.  

\end{lemma}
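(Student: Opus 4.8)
The plan is to analyze the structure of $\bar{Z}\cup S_1$ as a submanifold of $S^3_r(J)$ and identify its fiber-slope behavior. First I would observe that, since we are in the case $n\ge 3$ and $r=\pm 1$, and since $h(S_1)\neq S_2$, the image $h(S_1)$ lies in some $\bar{Y}'$ (by the notation setup), so $h(\bar{Z}\cup S_1)\subset\bar{Y}'$ by Lemma \ref{map_to_Ybar}; therefore $\bar{Z}\cup S_1$ maps into a subset of $S^3$ under $h$, and its boundary is a single torus (the one separating $\bar{Z}\cup S_1$ from $\bar{Y}$), so Lemma \ref{union_of_JSJ_is_knot_complement} applies and $\bar{Z}\cup S_1$ is the complement of a non-trivial knot in $S^3$. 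This gives the first sentence.

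Next I would determine the fiber-slope of $S_1$ at the boundary torus $\partial(\bar{Z}\cup S_1)$. This boundary torus is precisely the JSJ-torus $T$ along which $S_1$ (a composing space $S(0,n;\alpha/\beta)$ after the $r$-Dehn filling, with $\alpha/\beta=\pm 1$) is glued to $Y$, the JSJ-piece of $\bar{Y}$ containing $\partial\bar{Y}$. Before the Dehn filling, $S_0=S(0,n+1;)$ is the complement of the key-chain link $H_n$, whose distinguished component corresponds to $\partial E(J)$; the boundary tori of $S_0$ other than $\partial E(J)$ correspond to the remaining components of $H_n$, and each such torus is glued to some $\partial E(J_i)$. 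By Lemma \ref{composite_slope}, the regular fibers of $S_0$ have meridional slope at each $\partial E(J_i)$; these fibers are undisturbed by the $r$-Dehn filling on $\partial E(J)$ (the filling only affects that one boundary component, replacing $S(0,n+1;)$ by $S(0,n;\alpha/\beta)$). Hence the regular fibers of $S_1$ still have meridional slope at the torus $T$ glued to $\partial\bar{Z}_{i_0}$ where $J_{i_0}=J_Y$, i.e., at $\partial(\bar{Z}\cup S_1)$ — because $\bar{Z}\cup S_1$ consists of $S_1$ together with all $\bar{Z}_i$ for $i$ with $J_i\neq J_Y$, so its outer boundary is exactly the torus where $S_1$ meets $\bar{Y}$, which is a copy of $\partial E(J_Y)$ sitting on $S_1$.

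Finally, the last sentence follows by combining the two previous ones with the identification of slopes across the JSJ-torus: the meridian of the knot whose complement is $\bar{Z}\cup S_1$ is, by definition, the fiber-slope we just identified as meridional; and this same torus, viewed from the $\bar{Y}$ side, is $\partial\bar{Y}$, whose knot-complement meridian is the meridian of $\bar Y$. Since the gluing map identifies these two tori, the meridian of $\bar{Z}\cup S_1$ and the meridian of $\bar{Y}$ are the same curve on $T$, which is the assertion. I expect the main obstacle to be bookkeeping: carefully tracking which boundary components of $S_0$ and $S_1$ correspond to which $\bar{Z}_i$, and confirming that the outer boundary of $\bar{Z}\cup S_1$ is the single torus adjacent to $Y$ rather than some other configuration — in particular that all the ``other'' prime-summand pieces $\bar{Z}_i$ really do get absorbed into $\bar{Z}\cup S_1$ and do not leave extra boundary. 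Once the combinatorics of the JSJ-tree is pinned down, the slope computation is immediate from Lemma \ref{composite_slope} and the fact that Dehn filling on $\partial E(J)$ does not touch the other tori.
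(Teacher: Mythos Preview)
Your argument for the first sentence is correct and matches the paper (it is exactly the content of Lemma~\ref{some_knot_complements}).

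There is, however, a genuine gap in your treatment of the second sentence, and it propagates into the third. In your second paragraph you apply Lemma~\ref{composite_slope} to the original composite knot $J$, obtaining that the regular fiber of $S_0$ (hence of $S_1$) has meridional slope at $\partial E(J_Y)$. That is correct, but it computes the slope using the parametrization coming from $\bar{Y}=E(J_Y)$: what you have shown is that the fiber coincides with the \emph{meridian of $\bar{Y}$} on the torus $T$. The second sentence of the lemma, by contrast, asserts that the fiber coincides with the \emph{meridian of $\bar{Z}\cup S_1$}, where $\bar{Z}\cup S_1$ is now viewed as a knot complement in its own right. These are two a~priori different simple closed curves on the same torus $T$, and identifying them is precisely the content of the third sentence. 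So when in your final paragraph you write that ``the meridian of the knot whose complement is $\bar{Z}\cup S_1$ is, by definition, the fiber-slope we just identified as meridional,'' you are asserting exactly the statement that remains to be proved.

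The missing idea is the one the paper supplies: since (in the relevant case $n\ge 3$, $r=\pm 1$) the piece $S_1\cong S(0,n;)$ is itself a composing space with at least three boundary components, the knot whose complement is $\bar{Z}\cup S_1$ is a \emph{composite} knot $J'$, with $S_1$ as its outermost JSJ-piece. Applying Lemma~\ref{composite_slope} to $J'$ (not to $J$) then gives that the fiber-slope of $S_1$ at $\partial E(J')=\partial(\bar{Z}\cup S_1)$ is meridional in the $\bar{Z}\cup S_1$ parametrization. Combining this with your (correct) computation on the $\bar{Y}$ side yields the third sentence. Your proposal had one of the two required applications of Lemma~\ref{composite_slope}; the second one, from the other side of $T$, is what was missing.
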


\begin{proof}

% By Lemma \ref{map_to_Ybar}, the image of $\bar{Z} \cup S_1$ under $h$ is contained in $\bar{Y}'$. By Lemma \ref{union_of_JSJ_is_knot_complement}, this is a knot complement, which also implies the first statement of this lemma. The space $\bar{Z} \cup S_1$ corresponds to the large ellipse in Figure \ref{larger_than_3}.

The first statement follows from Lemma \ref{some_knot_complements}.
An example of $\bar{Z} \cup S_1$ is circled in Figure \ref{larger_than_3} by a large ellipse.
For the second statement, note that  $S_1$ is a composing space with $\ge 3$ boundary components.
Then $\bar{Z} \cup S_1$ can be regarded as the  complement of some composite knot $J'$,
and the fiber-slope of $S_1$ is meridional at $\partial E(J')$ parametrized using $\bar{Z} \cup S_1 $,
by Lemma \ref{composite_slope}.
% Since $h$ preserves slopes and sends fibers to fibers, 
% the fiber-slope of $S_1$ is also meridional at  $\partial (\bar{Z} \cup S_1 )$,
% parametrized using $\bar{Z} \cup S_1 $. 
The fiber-slope of $S_1$ is again meridional at $\partial \bar{Y}$, parametrized using $\bar{Y}$.
Thus we obtain the last statement.
\end{proof}

% Note that by Lemma \ref{map_to_Ybar},
% we have $h(Y\cup S_1 \cup \bar{Z}) \subset \bar{Y}'$,
% and hence this space can also be regarded as a knot complement.

\begin{figure}[ht]
\includegraphics{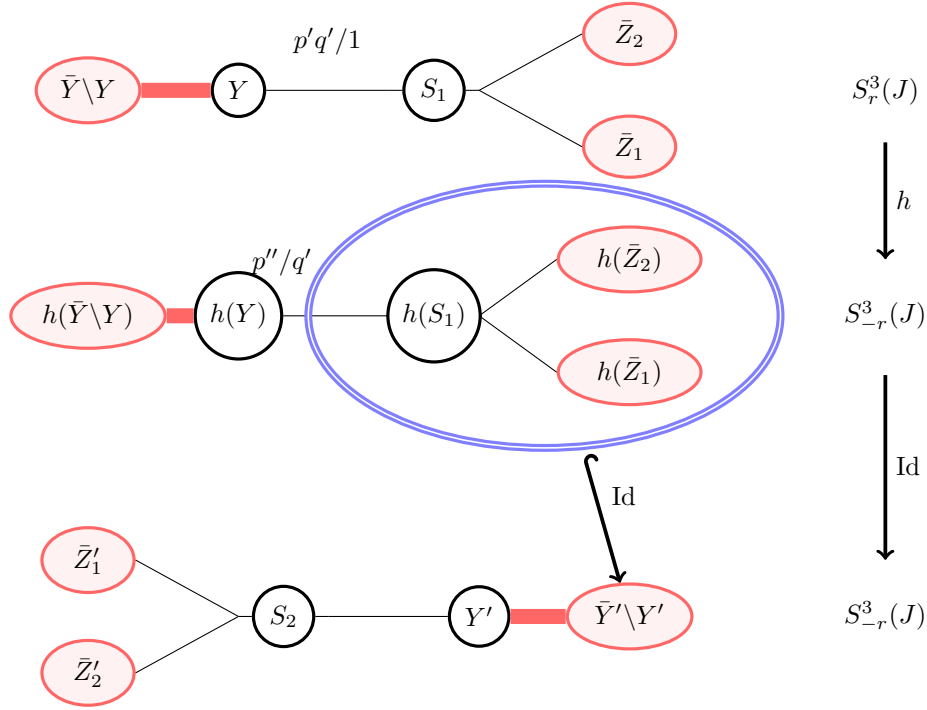}
\caption{ \textbf{Illustration of Proposition \ref{three_factors_r_Z}.} The first row is the JSJ-tree for $S^3_r(J)$ and the other two rows are the JSJ-trees for $S^3_{-r}(J)$. These two JSJ-trees for $S^3_{-r}(J)$ look different because different parts of the JSJ-trees are explicitly drawn. (Recall that each elliptical node represents a collection of JSJ-pieces.) The fiber-slopes are added for the case that $Y$ is a cable space. The case of hyperbolic $Y$:  in the first row, the spaces $\bar{Y}$ and $S_1\cup \bar{Z}$ are glued by identifying meridians with meridians; (both spaces are knot complements;) in the second row, the corresponding spaces are glued by identifying longitudes with meridians.
\label{Y_is_cable_or_hyperbolic}}
\end{figure}

\begin{prop}
\label{three_factors_r_Z}
$S^3_1(J) \not\cong S^3_{-1}(J)$.
\end{prop}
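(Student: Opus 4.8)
The plan is to derive a contradiction from the existence of an orientation-preserving homeomorphism $h: S^3_1(J) \to S^3_{-1}(J)$ in the remaining case $n \ge 3$, $r = 1$. By Lemma \ref{not_fixing_root} we may assume $h(S_1) \neq S_2$, so $h(S_1) \subset \bar{Y}'$ for some prime summand $J_Y$, and by Lemma \ref{map_to_Ybar} in fact $h(S_1) \subset \bar{Y}' \backslash Y'$. The key structural input is Lemma \ref{meridians_are_glued_together}: the space $\bar{Z} \cup S_1$ is a knot complement, the fiber-slope of $S_1$ on its boundary torus is meridional, and this torus is glued to $\partial\bar{Y}$ by matching meridians to meridians (here I use that, by Theorem \ref{gordon_luecke}, the meridian and longitude of $\bar{Y}$ regarded as a knot complement are intrinsic). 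The strategy is to examine which of the four types in Theorem \ref{jsj_of_knot_complement} the piece $Y'$ (equivalently $Y$, since $h(Y) = Y'$ up to the structure of the tree, or rather $Y$ maps appropriately) can be, and in each case track how $h$ must move the relevant slopes, playing the meridional fiber-slope of $S_1$ against the fiber-slope or gluing data on the $\bar{Y}'$ side.

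First I would argue that $Y$ cannot be a composing space: it is adjacent to $S_2$ (once we pass through $h$), and adjacent composing spaces are forbidden by Theorem \ref{jsj_of_knot_complement}(2); similarly $Y$ is not a torus knot complement since $\partial E(J) \subset \bar Y$ would be violated. So $Y$ is either a cable space or a hyperbolic piece. If $Y$ is a cable space: on the $S^3_1(J)$ side, $\bar Y$ and $S_1 \cup \bar Z$ are glued meridian-to-meridian, whereas Lemma \ref{cable_space_fiber_slope} pins down the fiber-slope of $Y$ at its outer boundary to be $p'/q'$ with $|q'|\ge 2$, hence non-meridional; on the $S^3_{-1}(J)$ side $h(S_1)$ sits inside $\bar Y'$ and the image $h(\bar Z \cup S_1)$ is a knot complement whose boundary carries the meridional fiber-slope of $h(S_1)$, which must match (under $h$, slope-preservingly by Theorem \ref{gordon_luecke}) the slope data of the cable space $Y'$ — and one checks these cannot be reconciled, exactly as in the proof of Proposition \ref{two_factors_r_1/2}. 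If $Y$ is hyperbolic, the corresponding JSJ-piece $Y'$ is the complement of a hyperbolic link $L$ whose distinguished component corresponds to $\partial E(J)$; here I would use Proposition \ref{parametrizing_slopes_two_methods_relation} to translate between the knot-complement parametrization and the link-component parametrization, noting the meridian/longitude swap on the non-distinguished components. The point is that on the $S^3_1(J)$ side the gluing of $\bar Y$ to $S_1 \cup \bar Z$ is meridian-to-meridian, while $h$ carries this to a gluing of $Y'$ (via its link parametrization) to $h(S_1 \cup \bar Z)$ in which the meridional fiber-slope of $h(S_1)$ ends up matched to a \emph{longitude} of a component of $L$ — contradicting that $h$ preserves slopes and sends the knot-complement meridian of $\bar Y$ to that of $h(\bar Y)$.

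The main obstacle, and the step that needs the most care, is the hyperbolic case: one must correctly bookkeep the two parametrizations of slopes on the torus $\partial \bar Y = \partial(S_1 \cup \bar Z)$ — the one coming from viewing $S_1 \cup \bar Z$ as a knot complement, the one from viewing $\bar Y$ as a knot complement, and the one from viewing $Y'$ as a link complement with its hyperbolic-link parametrization — and verify that the meridian-meridian identification on one side is genuinely incompatible with what $h$ forces on the other. This is where Proposition \ref{parametrizing_slopes_two_methods_relation} (the meridian/longitude inversion on non-distinguished components) does the work: it converts the "meridional" statement on the knot-complement side into a "longitudinal" statement on the link-complement side, and then Theorem \ref{gordon_luecke} (the homeomorphism $h$ preserving meridians and longitudes of honest knot complements) closes the contradiction. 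Once $Y$ is ruled out in all admissible types, no such $h$ exists, completing the proof together with Proposition \ref{first_reduction}, Corollary \ref{prime_factors_greater_than_two}, and Proposition \ref{two_factors_r_1/2}.
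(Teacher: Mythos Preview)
Your overall architecture is the same as the paper's: rule out composing and torus-knot pieces for $Y$, then split into the cable and hyperbolic cases and force a slope contradiction against Lemma~\ref{meridians_are_glued_together}. But two things in your execution are off, and both trace back to a single missing object.

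First, a persistent confusion: you repeatedly write $Y'$ when you mean $h(Y)$. The piece $Y'$ is by definition the outermost JSJ-piece of $\bar Y' = E(J_Y)$ in $S^3_{-1}(J)$, and there is no reason for $h(Y)$ to land on $Y'$; indeed $h(S_1)\subset \bar Y'\setminus Y'$, and $h(Y)$ is simply the JSJ-piece of $S^3_{-1}(J)$ adjacent to $h(S_1)$ on the $\bar Y$-side. In the hyperbolic case you say the distinguished component of $L$ corresponds to $\partial E(J)$, but it corresponds to $\partial E(J_Y)=\partial\bar Y$; the whole point is that after applying $h$ this boundary is no longer distinguished.

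Second, the missing object: in both cases the paper's argument hinges on viewing $h(Y\cup S_1\cup \bar Z)$ as a knot complement (Lemma~\ref{some_knot_complements}). This is what flips the role of the torus $h(\partial\bar Y)=\partial h(\bar Z\cup S_1)$: in $\bar Y$, the torus $\partial\bar Y$ is the \emph{outer} boundary of the cable space $Y$ (fiber-slope $p'q'/1$, not $p'/q'$ as you wrote --- that is the inner slope), whereas inside $h(Y\cup S_1\cup\bar Z)$ the same torus is the \emph{inner} boundary of the cable space $h(Y)$, with fiber-slope $p''/q'$ relative to the meridian of $h(\bar Z\cup S_1)$. The paper then compares a single invariant --- the intersection number of the fiber of $Y$ with the meridian of $\bar Z\cup S_1$ --- before and after $h$: it is $1$ on the domain side (since the fiber is $p'q'/1$ and meridians match by Lemma~\ref{meridians_are_glued_together}) and $q'\ge 2$ on the target side. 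Your cable argument instead compares the fiber of $Y$ with the fiber of $S_1$; those \emph{are} different, but they are supposed to be (adjacent Seifert pieces never share fiber-slopes), so no contradiction follows, and the appeal to Proposition~\ref{two_factors_r_1/2} does not transfer because there $h(S_1)$ itself was the cable space. In the hyperbolic case the same flip is what makes Proposition~\ref{parametrizing_slopes_two_methods_relation} bite: inside $h(Y\cup S_1\cup\bar Z)$ the component $h(L_0)$ is \emph{non}-distinguished, so the meridian of $h(\bar Z\cup S_1)$ equals the longitude of $h(L_0)$, which equals the longitude of $h(\bar Y)$; combined with the meridian--meridian gluing on the domain side and Gordon--Luecke, this is the contradiction. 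You gesture at this, but without naming $h(Y\cup S_1\cup\bar Z)$ you cannot justify why $h(L_0)$ has become non-distinguished.
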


\begin{proof}
First, we assume $Y$ is a cable space. 
It follows that $h(Y)$ is also a cable space with a singular fiber of the same multiplicity. 
By Theorem \ref{jsj_of_knot_complement}, 
the space $h(\bar{Y})$ is the complement of some $(p',q')$-cable, 
and  $h(Y\cup S_1 \cup \bar{Z})$ is the complement of some $(p'',q')$-cable.
(cf. Lemma \ref{some_knot_complements}.) 
By Lemma \ref{cable_space_fiber_slope}, 
the fiber-slope of $Y$ at $\partial  \bar{Y}$ is $p'q'/1$, 
parametrized by $\bar{Y}$,
and the fiber-slope of $h(Y)$ at $\partial h(\bar{Z}\cup S_1)$ is $p''/q'$,
parametrized by $h(Y\cup S_1 \cup \bar{Z})$. 
(See Figure \ref{Y_is_cable_or_hyperbolic}.) 
The intersection number of the fibers of $Y$ with the meridians of $\bar{Y}$ is one. 
By Lemma \ref{meridians_are_glued_together}, 
the fibers also intersect once with the meridians of $\bar{Z}\cup S_1$. 
However, 
the intersection number of the fibers of $h(Y)$ with meridians of $h(\bar{Z}\cup S_1)$ is $q' (\ge 2)$. 
Since $h$ preserves both slopes and fibers, we have a contradiction.

Second, we assume $Y$ is the complement of a hyperbolic link $L$. 
Regarding $\bar{Y}$ as the knot complement $E(J_Y)$,
its boundary corresponds to the distinguished component $L_0$ of $L$,
and its longitudes are identified with the longitudes of $L_0$.
Since $h$ preserves slopes,
the longitudes of $h(\bar{Y})$ should be identified with the longitudes of $h(L_0)$.
Now recall that $h(Y\cup S_1 \cup \bar{Z})$ can be regarded as a knot complement.
By Proposition \ref{parametrizing_slopes_two_methods_relation},
the meridians of $h(\bar{Z}\cup S_1)$ are the longitudes of $h(L_0)$.
(Note that $h(L_0)$ is not the distinguished component of $h(L)$.)
Hence we see that the longitudes of $h(\bar{Y})$ are identified with the meridians of $h(\bar{Z}\cup S_1)$.
However, the meridians of $\bar{Y}$ are glued to the meridians of $\bar{Z}\cup S_1$,
by Lemma \ref{meridians_are_glued_together}. 
Since $h$ preserves slopes by Theorem \ref{gordon_luecke}, 
we have a contradiction.
\end{proof}

% Recall that $h(Y\cup S_1 \cup \bar{Z})$ can be regarded as a knot complement.
% By Proposition \ref{parametrizing_slopes_two_methods_relation},
% the meridians of $h(\bar{Z}\cup S_1)$ are the longitudes of 
% the corresponding unknotted component $h(L_0)$ of the hyperbolic link. 
% On the other hand, 
% the space $\bar{Y}$ is regarded as the knot complement $E(J_Y)$. 
% The longitudes of $\bar{Y}$ are the longitudes of the corresponding unknotted component $L_0$ of the hyperbolic link. 
% Thus, the longitudes of the knot component $h(\bar{Y})$ are glued to 
% the meridians of $h(\bar{Z}\cup S_1)$.

% (Note that this link component is the distinguished component when we work in  $h(\bar{Y})$,  
% but it is not  the distinguished component with respect to $h(Y\cup S_1 \cup \bar{Z})$. 
% This link component plays different roles in these two cases.)  

Combining  Proposition \ref{first_reduction},
\ref{two_factors_r_1/2},
\ref{three_factors_r_Z},
and 
Corollary \ref{prime_factors_greater_than_two},
we have the following theorem.

\begin{theorem}
Let $J$ be a composite knot. 
Suppose there exists an orientation-preserving homeomorphism $h: S^3_{r}(J) \to S^3_{s}(J)$, 
then $r = s$.
\end{theorem}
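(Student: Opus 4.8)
The plan is to assemble the already-established pieces into a case analysis driven by the possible surgery slopes. By Theorem~\ref{hanselmans theorem}, if $S^3_r(J) \cong S^3_s(J)$ with $r \neq s$ then (after reversing orientation if necessary, which is harmless since a composite knot is a connected sum and we may apply the argument to the pair) the pair $\{r,s\}$ is constrained to lie in $\{\pm 1\}$, $\{\pm 2\}$, or $\{\pm 1/q\}$ with $q \geq 1$; Proposition~\ref{first_reduction} further narrows this to $\{\pm 1\}$, $\{\pm 2\}$, or $\{\pm 1/2\}$, using that for $q > 2$ the composing-space JSJ-pieces $S_1$ and $S_2$ are non-homeomorphic Seifert fibered spaces (Proposition~\ref{twist_cor}) glued along JSJ-tori, so Lemma~\ref{diff_sfs_piece} forbids a homeomorphism. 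So it suffices to show $S^3_r(J) \not\cong S^3_{-r}(J)$ for $r \in \{1, 2, 1/2\}$.

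The next step is to set up the dichotomy on a hypothetical homeomorphism $h\colon S^3_r(J) \to S^3_{-r}(J)$: either $h$ sends the root composing space $S_1$ to $S_2$, or it does not. Lemma~\ref{not_fixing_root} kills the first possibility — if $h(S_1) = S_2$, then $h$ restricts to a slope-preserving map on the boundary components of $S_1$ (by the Knot Complement Theorem~\ref{gordon_luecke}, since those tori bound knot complements in $S^3$), contradicting the fact from Proposition~\ref{twist_cor} that any homeomorphism $S_1 \to S_2$ must realize a nontrivial fiber Dehn twist on some boundary torus. So I would assume $h(S_1) \neq S_2$ from here on, which by acyclicity of the JSJ-tree forces $h(S_1)$ into one of the summand subtrees $\bar Y' = \bar Z'_i$.

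Then I would split on the number $n$ of prime summands and on $r$. If $n \geq 3$ and $r = \pm 1/2$, or $n = 2$ and $r \in \mathbb{Z}$, Corollary~\ref{prime_factors_greater_than_two} applies: here $S_1$ has a shape (a composing space with $\geq 3$ boundary tori, resp. a twice-punctured Seifert fibered space with no singular fiber) that by the classification in Theorem~\ref{jsj_of_knot_complement} cannot occur as a JSJ-piece of $S^3_{-r}(J) \setminus S_2$, forcing $h(S_1) = S_2$ and contradicting Lemma~\ref{not_fixing_root}. If $n = 2$ and $r = \pm 1/2$, Proposition~\ref{two_factors_r_1/2} handles it: $S_1$ is a cable space, so $h(S_1)$ is a cable space and $h(\bar Z_1 \cup S_1)$ is the complement of a cable knot, hence the fiber-slope of $h(S_1)$ on $\partial h(\bar Z_1)$ is non-meridional (Lemma~\ref{cable_space_fiber_slope}) while the fiber-slope of $S_1$ on $\partial \bar Z_1$ is meridional (Lemma~\ref{composite_slope}) — contradicting that $h$ preserves both slopes and fibers. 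If $n \geq 3$ and $r = \pm 2$, then $g(J) \geq 3 > 2$, excluded directly by Theorem~\ref{hanselmans theorem}. The remaining case $n \geq 3$, $r = \pm 1$ is Proposition~\ref{three_factors_r_Z}, proved by tracking meridian/longitude identifications across the glue between $\bar Y$ and $S_1 \cup \bar Z$ (meridian-to-meridian by Lemma~\ref{meridians_are_glued_together}) against what $h$ forces on the image side, using the cable-space fiber-slope count or, when $Y$ is hyperbolic, the slope-inversion of Proposition~\ref{parametrizing_slopes_two_methods_relation} together with the Knot Complement Theorem. I expect this last case — the hyperbolic subcase of $n\geq 3$, $r=\pm 1$ — to be the main obstacle, since it is the one place where neither the Seifert-fibered rigidity nor a crude fiber-intersection count suffices, and one must argue carefully about which component of the hyperbolic link is distinguished on the image side and how the two slope parametrizations interchange meridians and longitudes.
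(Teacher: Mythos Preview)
Your proposal is correct and follows essentially the same approach as the paper: the paper's proof of the main theorem is literally the one-line sentence ``Combining Proposition~\ref{first_reduction}, \ref{two_factors_r_1/2}, \ref{three_factors_r_Z}, and Corollary~\ref{prime_factors_greater_than_two}, we have the following theorem,'' and your case analysis on $(n,r)$ reproduces exactly this assembly, including the genus observation that eliminates $r=\pm 2$ when $n\ge 3$. Your parenthetical about reversing orientation is unnecessary (Theorem~\ref{hanselmans theorem} already outputs pairs of the form $\{r,-r\}$), but this is cosmetic and does not affect the argument.
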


\bibliographystyle{ha}
\bibliography{composite.bib}

\end{document}